\newtheorem{theorem}{Theorem}[section]
\newtheorem{cor}[theorem]{Corollary}
\newtheorem{lemma}[theorem]{Lemma}
\newtheorem{prop}[theorem]{Proposition}
\newcommand{\p}{\partial}
\newcommand{\eqnref}[1]{(\ref {#1})}
\newcommand{\Nbb}{\mathbb{N}}
\newcommand{\Rbb}{\mathbb{R}}
\newcommand{\Kcal}{\mathcal{K}}
\def\Bn{{\bf n}}
\def\Br{{\bf r}}
\def\Bx{{\bf x}}
\def\By{{\bf y}}
\def\BG{{\bf G}}
\def\BH{{\bf H}}
\def\BP{{\bf P}}
\newcommand{\Ge}{\epsilon}
\newcommand{\Gvf}{\varphi}
\newcommand{\Gl}{\lambda}
\newcommand{\Gs}{\sigma}
\newcommand{\GD}{\Delta}
\newcommand{\GO}{\Omega}
\newcommand{\beq}{\begin{equation}}
\newcommand{\eeq}{\end{equation}}
\numberwithin{equation}{section}
\numberwithin{figure}{section}
\begin{document}

\title{Weyl's law for the eigenvalues of the Neumann--Poincar\'e operators in three dimensions:  
Willmore energy and surface geometry}

\author{Yoshihisa Miyanishi \thanks{Center for Mathematical Modeling and Data Science, Osaka University, Osaka 560-8531, Japan. 
Email: {\tt miyanishi@sigmath.es.osaka-u.ac.jp}.}}
\date{}
\maketitle

\begin{abstract}
We deduce eigenvalue asymptotics of the Neumann--Poincar\'e operators in three dimensions. 
The region $\GO$ is  $C^{2, \alpha}$ ($\alpha>0$) bounded in $\Rbb^3$ and the Neumann--Poincar\'e operator $\Kcal_{\p\GO} : L^2(\partial \Omega) \rightarrow L^2(\partial \Omega) $ is defined by     
$$
\Kcal_{\p\GO}[\psi](\Bx) := \frac{1}{4\pi} \int_{\partial \Omega} \frac{\langle \By-\Bx, \Bn(\By) \rangle}{|\Bx-\By|^3} \psi(\By)\; dS_{\By}
$$
where $dS_{\By}$ is the surface element and $\Bn({\By})$ is the outer normal vector on $\partial \Omega$. 
Then the ordering eigenvalues of the Neumann--Poincar\'e operator $\lambda_j (\Kcal_{\p\GO})$ satisfy 
$$
|\lambda_j(\Kcal_{\p\GO})| \sim \Big\{\frac{3W(\p\GO) - 2\pi \chi(\p\GO)}{128 \pi} \Big\}^{1/2} j^{-1/2}\quad \text{as}\ j \rightarrow \infty. 
$$
Here $W(\p\GO)$ and $\chi(\p\GO)$ denote, respectively, the {\it Willmore energy}  and the {\it Euler charateristic}  of the boundary surface $\p\GO$. This formula is the so-called Weyl's law for eigenvalue problems of Neumann--Poincar\'e operators.
\end{abstract}

\noindent{\footnotesize {\bf 2010 AMS subject classifications}. 47A75 (primary), 58J50 (secondary)}

\noindent{\footnotesize {\bf Key words}. Neumann-Poincar\'e operator, Eigenvalues, Weyl's law, Pseudo-differential operators, Willmore energy}


\section{Introduction and Results}
\par\hspace{5mm} 
The Neumann--Poincar\'e (abbreviated by NP) operator is a boundary integral operator which appears naturally when solving classical boundary value problems using layer potentials. Its study (for the Laplace operator) goes back to C. Neumann \cite{Neumann-87} and H. Poincar\'e \cite{Poincare-AM-87} as the name of the operator suggests. If the boundary of the domain, on which the NP operator is defined, is $C^{1, \alpha}$ smooth, then the NP operator 
is compact. Thus the Fredholm integral equation, which appears when solving Dirichlet or Neumann problems, can be solved using the Fredholm index theory \cite{Fredholm-03}. If the domain has corners, the NP operator is not any more a compact operator, but a singular integral operator. The solvability of the corresponding integral equation was established in \cite{Verch-JFA-84}.

Regarding spectral properties of the NP operator, it is proved in \cite{KPS} that the NP operator can be realized as a self-adjoint operator by introducing a new inner product on the $H^{-1/2}$-space (see also \cite{KKLSY-JLMS-16}), and so the NP spectrum consists of continuous spectrum and discrete spectrum (and possibly the limit points of discrete spectrum). If the domain has corners, the corresponding NP operator may exhibit a continuous spectrum (as well as eigenvalues). For recent development in this direction we refer to \cite{HKL-AIHP-17, KLY, PP-JAM-14, PP-arXiv}. If the domain has the smooth boundary, then the spectrum consists of eigenvalues converging to $0$. We refer to \cite{AKM2, Miyanishi:2015aa} for progress on the convergence rate of NP eigenvalues in two dimensions. However the satisfactory answers of decay rates in three dimensions were less-known even for smooth cases 
since it uses the smoothness of the kernel of NP operators in two dimensions \cite{Miyanishi:2015aa}. 
With this in mind, the purpose of this paper is to prove the so-called ``Weyl law'' which is the asymptotic behavior of NP eigenvalues in three dimensions.

To state the result in a precise manner, let $\Omega$ be a $C^{1, \alpha}$ bounded region in $\Rbb^3$. The NP operator $\Kcal_{\p \GO} : L^2{(\p\GO)} \rightarrow L^2{(\p\GO)}$ is defined by 
\beq\label{definition of NP operators}
\quad \Kcal_{\p \GO}[\psi](\Bx) := \frac{1}{4\pi} \int_{\partial \Omega} \frac{\langle \By-\Bx, \Bn(\By) \rangle}{|\Bx-\By|^3} \psi(\By)\; dS_{\By}   
\eeq 
where $dS_{\By}$ is the surface element and $\Bn(\By)$ is the outer normal vector on $\partial \Omega$.  
As described above, we know that $\Kcal_{\p \GO}$ is a compact operator on $L^2(\partial \Omega)$ and its eigenvalues consist of at most countable numbers, with $0$ the only possible limit point. It is also known that the eigenvalues of the NP operator lie in the interval $(-1/2, 1/2]$ and the eigenvalue $1/2$ corresponds to constant eigenfunctions. We denote the set of NP eigenvalues counting multiplicities by 
\beq
\sigma(\Kcal_{\p \GO})
=\{\; \lambda_j(\Kcal_{\p \GO})\ \mid\ \frac{1}{2}=|\lambda_0(\Kcal_{\p \GO})| > |\lambda_1(\Kcal_{\p \GO})| \geq |\lambda_2(\Kcal_{\p \GO})| \geq\; \cdots \geq 0 \}. 
\eeq
Here our main purpose is to deduce the asymptotic behavior of NP eigenvalues by using the basic ingredients of surface geometry. To do this, we also define the Willmore energy  $W(\p\GO)$ by  
\beq\label{definition of Willmore energy}
W(\p\GO):=\int_{\p\GO} H^2(x)\; dS_{x}  
\eeq
where $H(x)$ is the mean curvature of the surface. Then we have: 
\begin{theorem}\label{main}
Let $\GO$ be a $C^{2, \alpha}$ bounded region with $\alpha>0$.  
Then 
\beq
|\lambda_j(\Kcal_{\p \GO})| \sim \Big\{\frac{3W(\p\GO) - 2\pi \chi(\p\GO)}{128 \pi} \Big\}^{1/2}  j^{-1/2}\quad \text{as}\ j\rightarrow \infty.  
\eeq
Here $W(\p\GO)$ and $\chi(\p\GO)$ denote, respectively, the Willmore energy and the Euler characteristic of the surface $\p\GO$.
\end{theorem}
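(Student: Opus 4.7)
The plan is to recognize $\Kcal_{\p\GO}$ as a classical pseudodifferential operator ($\Psi$DO) of order $-1$ on the compact surface $\p\GO$, compute its principal symbol from a local expansion of the integral kernel, and then invoke a Weyl-type asymptotic formula for the eigenvalues of self-adjoint compact $\Psi$DOs of negative order on a compact manifold. The Gauss--Bonnet theorem will convert the resulting integral over the cosphere bundle into the announced expression in $W(\p\GO)$ and $\chi(\p\GO)$.

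\textbf{Symbol computation.} Fix $x\in\p\GO$ and choose local coordinates $(s,t,z)$ with $x$ at the origin, $T_x\p\GO$ the $(s,t)$-plane aligned with the principal directions, and $\Bn(x)$ the positive $z$-axis. Under the $C^{2,\alpha}$ hypothesis, $\p\GO$ is the graph of $h(s,t)=-\tfrac{1}{2}(k_1 s^2+k_2 t^2)+O(r^{2+\alpha})$ with $r^2=s^2+t^2$ and $k_j=k_j(x)$ the principal curvatures. A direct Taylor expansion gives $\langle \By-\Bx,\Bn(\By)\rangle=\tfrac{1}{2}(k_1 s^2+k_2 t^2)+O(r^{2+\alpha})$ and $|\Bx-\By|^3=r^3(1+O(r^\alpha))$, so the kernel in \eqref{definition of NP operators} splits as $k_0(s,t)+k'(x,y)$ with leading homogeneous part
\[ k_0(s,t)=\frac{1}{8\pi}\,\frac{k_1 s^2+k_2 t^2}{(s^2+t^2)^{3/2}} \]
and remainder $|k'(x,y)|\lesssim |\Bx-\By|^{\alpha-1}$. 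Taking the two-dimensional Fourier transform of $k_0$ via $s^2/|\Bv|^3=1/|\Bv|-\p_s^2|\Bv|$, $\widehat{1/|\Bv|}(\xi)=2\pi/|\xi|$, and (from $\Delta|\Bv|=1/|\Bv|$) $\widehat{|\Bv|}(\xi)=-2\pi/|\xi|^3$, I obtain the principal symbol
\[ \sigma_{-1}(x,\xi)=\frac{k_1(x)\,\xi_2^2+k_2(x)\,\xi_1^2}{4|\xi|^3}. \]
As a sanity check, on the sphere of radius $R$ this gives $(4R|\xi|)^{-1}$, matching the explicit kernel $(8\pi R|\Bx-\By|)^{-1}$.

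\textbf{Weyl law and evaluation.} Since $\Kcal_{\p\GO}$ is not $L^2$-self-adjoint, I would use the Plemelj symmetrization principle of \cite{KPS} to pass to the self-adjoint operator $\Scal^{1/2}\Kcal_{\p\GO}\Scal^{-1/2}$, where $\Scal$ is the single-layer potential. Conjugation by the invertible $\Psi$DO $\Scal^{1/2}$ preserves principal symbols, so the symmetrized operator is a self-adjoint $\Psi$DO of order $-1$ with symbol $\sigma_{-1}$ and with eigenvalues equal to $\Gl_j(\Kcal_{\p\GO})$. The Birman--Solomyak Weyl formula for such operators on a compact surface then gives
\[ N(\lambda):=\#\{j:|\Gl_j(\Kcal_{\p\GO})|\ge\lambda\}\sim\frac{1}{(2\pi)^2}\,\mathrm{vol}\bigl\{(x,\xi)\in T^*\p\GO : |\sigma_{-1}(x,\xi)|\ge\lambda\bigr\}. \]
Homogeneity of degree $-1$ in $\xi$ and polar coordinates reduce the right-hand side to $(8\pi^2\lambda^2)^{-1}\int_{\p\GO}\!\int_{S^1}|\sigma_{-1}(x,\omega)|^2\,d\omega\,dS_x$. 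Using $\int_0^{2\pi}\sin^4\theta\,d\theta=\int_0^{2\pi}\cos^4\theta\,d\theta=3\pi/4$, $\int_0^{2\pi}\sin^2\theta\cos^2\theta\,d\theta=\pi/4$ together with $k_1^2+k_2^2=4H^2-2K$, the fiber integral simplifies to $\tfrac{\pi}{16}(3H(x)^2-K(x))$. Integrating over $\p\GO$ and invoking Gauss--Bonnet, $\int_{\p\GO}K\,dS=2\pi\chi(\p\GO)$, yields $\tfrac{\pi}{16}(3W(\p\GO)-2\pi\chi(\p\GO))$. Inverting $N(\lambda)=j$ produces the constant $\bigl\{(3W-2\pi\chi)/(128\pi)\bigr\}^{1/2}$ announced in the theorem.

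\textbf{Main obstacle.} The principal technical hurdle is the low regularity: $C^{2,\alpha}$ is only just enough to define a principal symbol, and the smooth $\Psi$DO calculus is unavailable. The crucial step, in the spirit of Birman--Solomyak perturbation theorems, is to show that the operator $R$ with kernel $k'$ lies in a Schatten class $\mathfrak{S}_p$ with $p<2$, so that its singular values decay strictly faster than $j^{-1/2}$ and cannot affect the leading asymptotic extracted from $\sigma_{-1}$. A secondary subtlety is the passage from $L^2$ to the $H^{-1/2}$-based self-adjointness, but this is handled cleanly by the similarity argument above, which preserves both eigenvalues and principal symbol.
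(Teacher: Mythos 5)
Your symbol computation and the evaluation of the Weyl constant are both correct and match the paper's result: in principal coordinates your $\sigma_{-1}(x,\xi)=\frac{k_1\xi_2^2+k_2\xi_1^2}{4|\xi|^3}$ is exactly the symbol of Lemma \ref{symbolT} specialized to $g_{ij}=\delta_{ij}$, $L=k_1$, $M=0$, $N=k_2$, and the fiber integral $\frac{\pi}{16}(3H^2-K)$ reproduces the constant $\{(3W-2\pi\chi)/(128\pi)\}^{1/2}$ after Gauss--Bonnet. Two remarks, one minor and one substantive.

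The minor one: you do not need a Schatten class $\mathfrak{S}_p$ with $p<2$ for the remainder. Hilbert--Schmidt ($p=2$) already yields $s_j(R)=o(j^{-1/2})$ by the elementary argument of Lemma~\ref{weak Schatten lemma}, and that $o$ is strictly smaller than the leading $j^{-1/2}$ term, so Ky--Fan perturbation preserves the asymptotic. The paper shows the error kernel is locally $L^2$ (hence Hilbert--Schmidt) and stops there; aiming for $p<2$ would require an additional modulus-of-continuity argument that is not necessary.

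The substantive gap is what you call the ``secondary subtlety,'' namely passing from $L^2$ non-self-adjointness to eigenvalue asymptotics via conjugation by $\Scal^{\pm1/2}$. In the smooth $\Psi$DO calculus that similarity argument is clean, but under the $C^{2,\alpha}$ hypothesis it is not: $\Scal$ has only a $C^{1,\alpha}$ symbol, its square root is not obviously a $\Psi$DO in a calculus with a usable composition formula, and the claim that $\Scal^{1/2}\Kcal_{\p\GO}\Scal^{-1/2}$ is ``a $\Psi$DO of order $-1$ with symbol $\sigma_{-1}$ modulo Hilbert--Schmidt'' would require exactly the kind of nonsmooth composition calculus one is trying to avoid. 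This is the place where a naive reading of your proposal fails. The paper takes a different and more robust route (Proposition~\ref{almost self-adjoint}): it establishes the Weyl law for the \emph{singular values} directly from the Birman--Solomyak/Grubb theorem (which is stated for singular values and only needs $C^\varepsilon$ symbols), then uses the Gohberg--Krein triangular decomposition $\Kcal_{\p\GO}=D+V$ with $D$ normal, $\sigma(D)=\sigma(\Kcal_{\p\GO})$, and $V$ quasinilpotent. Since the NP eigenvalues are already known to be real (the $H^{-1/2}$ self-adjointness of \cite{KPS}, which you do not need to transport symbolically), $D$ is self-adjoint; since $\Kcal_{\p\GO}-\Kcal_{\p\GO}^*$ is Hilbert--Schmidt (immediate from the fact that $\Kcal_{\p\GO}$ and $\Kcal_{\p\GO}^*$ share the same principal symbol, so their difference has an $L^2$ kernel), $V$ is Hilbert--Schmidt, and Ky--Fan then transfers the singular-value asymptotics of $\Kcal_{\p\GO}$ to $D$, whose singular values are $|\lambda_j(\Kcal_{\p\GO})|$. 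This argument never requires conjugating a nonsmooth operator, and it is the mechanism by which the paper circumvents the obstacle your similarity step merely waves at. To make your proof rigorous you should replace the symmetrization step by this triangular-decomposition argument, or else supply a genuine justification of the $C^{2,\alpha}$ composition calculus for $\Scal^{\pm1/2}$.
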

Thus the NP operator has infinite rank \cite{KPS} and the decay rate of NP eigenvalues is $j^{-1/2}$ for $C^{2, \alpha}$ regions. Furthermore, the integral (\ref{definition of Willmore energy}) is specially interesting because it has the remarkable property of being invariant under  M\"obius transformations of ${\Rbb}^3$ \cite{Bl, Wh}. Thus we find that the asymptotic behavior of NP eigenvalues is  also M\"obius invariant since the Euler characteristic is topologically invariant. We will present some further facts and applications later (See section \ref{sec: applications}). 

To clarify the meaning of Theorem \ref{main}, let us consider the case $\p\GO=S^2$. It is proved by Poincar\'e \cite{Poincare-AM-87} that the NP eigenvalues on a two-dimensional sphere are $\frac{1}{2(2k+1)}$ for $k = 0, 1, 2\, \ldots$ and their multiplicities are $2k +1$ (see also \cite{AKMU}). So we may enumerate them as 
$$
{\underbrace{\frac{1}{2}}_{1}, \underbrace{\frac{1}{6}, \frac{1}{6}, \frac{1}{6}}_{3}, \underbrace{\frac{1}{10}, \frac{1}{10}, \frac{1}{10}, \frac{1}{10}, \frac{1}{10}}_{5}, \cdots, \underbrace{\frac{1}{2(2k+1)}, \cdots, \frac{1}{2(2k+1)}}_{2k+1}}, \cdots.
$$
It easily follows that the $j=k^2$th eigenvalue satisfies 
$$|\lambda_j(\Kcal_{S^2})|=\frac{1}{2(2k+1)} \sim \frac{1}{4} j^{-1/2}. $$
In contrast, one can verify these asymptotics from Theorem \ref{main}:    
$$|\lambda_j(\Kcal_{S^2})| \sim \Big\{\frac{3W(S^2) - 2\pi \chi(S^2)}{128 \pi} \Big\}^{1/2}  j^{-1/2}=\frac{1}{4} j^{-1/2}$$ since ${W(S^2) =4\pi}$ and $\chi(S^2)=2$.  This calculation, of course, is consistent with the asymptotic of the explicit eigenvalues. 

It is worth comparing  the decay rates for three dimensional NP eigenvalues obtained here with those for two  dimensional NP eigenvalues. For the two dimensional cases, it is well known that the eigenvalues of the integral operator $\Kcal_{\p \GO}$ are symmetric with respect to the origin \cite{BM, Sh}. The only exception is the eigenvalue $1/2$ corresponding to constant eigenfunctions. NP eigenvalues are invariant under M\"obius transformations \cite{MR0104934}. 
One of the main distinguished features is that the decay rates deeply depend on the smoothness of the boundary. Indeed,  we \cite{AKM2, Miyanishi:2015aa} proved the decay rate which depends on the smoothness of the $C^k$ smooth boundary 
$\partial \Omega$, that is, for any $\tau >-k+3/2$,   
$$
|\lambda^{\pm}_j(\Kcal_{\p \GO})| = o(j^{\tau}) \quad \text{as}\; j\rightarrow \infty, 
$$
where $o$ means the small order. Moreover for the analytic boundary, we have the exponential decay rate:  
$$
|\Gl^{\pm}_{j}(\Kcal_{\p \GO})| \le Ce^{-j\Ge} \quad \text{as}\; j\rightarrow \infty, 
$$
for any $j$. Here $\Ge$ is the modified Grauert radius of $\p\GO$ (See \cite{AKM2} for the precise statement). 

We also remark that $\Kcal_{\p \GO}$ is not self-adjoint on $L^2(\p\GO)$. 
This difficulty can be circumvented by restating from singular values into NP eigenvalues. To do this, this paper is organized as follows: In the next section we introduce 
the notations of surface geometry and state the relationships among  singular- and eigenvalues using Ky-Fan theorem and the triangular representation of NP operators. In section \ref{sec: symbol}, we provide the approximate pseudo-differential operators for NP operators. Then the relationships given in section \ref{sec: notations} yield the Weyl law of NP eigenvalues in section \ref{sec:  Weyl law}. 
The applications are provided in section \ref{sec: applications}. This paper ends with some discussions 
and a brief conclusion. 

\date{\textbf{Acknowledgement: }I am grateful to Prof. H. Kang and Prof. K. Ando for useful discussions on the early stages of this work. I would also like to thank the members of Inha university for their hospitality during my visit, when the main results of this paper were obtained.}

\section{Preliminaries and Notations}\label{sec: notations}

As preliminaries, we shall mention the notations of surface geometry and some results of Schatten class used in this paper. 

\subsection{Surface geometry}\label{sec: surface geometry}
Let $M$ be a two-dimensional surface without bounday and $\Br = \Br(s, t)$ be 
a regular parametrization of a surface in ${\Rbb}^3$, where $\Br$ is 
a smooth (at least $C^{2}$) vector-valued function of two variables. 
It is common to denote the partial derivatives of $\Br$ with respect to $s$ and $t$ 
by $\Br_s$ and $\Br_t$. The first fundamental form is the inner product 
on the tangent space of a surface in three-dimensional Euclidean space 
which is induced canonically from the dot product of ${\Rbb}^3$. 
We denote the first fundamental form the Roman numeral I:  
\beq
 \mathrm {I}=E ds^2 +2F ds\; dt +G dt^2. 
\eeq
Let $\Br(s, t)$ be a parametric surface. Then the inner product of two tangent vectors is
$$
{\displaystyle {\begin{aligned}&{}\quad \mathrm {I} (a\Br_{s}+b\Br_{t},c\Br_{s}+d\Br_{t})\\
&=ac ( \Br_{s}\cdot \Br_{s}) +(ad+bc)(\Br_{s}\cdot \Br_{t}) +bd( \Br_{t}\cdot \Br_{t}) \\
&=Eac+F(ad+bc)+Gbd,\end{aligned}}} 
$$
Thus 
\begin{align}
E=\Br_{s}\cdot \Br_{s},\ F=\Br_{s}\cdot \Br_{t},\ G=\Br_{t}\cdot \Br_{t} 
\end{align}
and hereafter we often write $E=g_{11},\ F=g_{12}=g_{21},\ G=g_{22}$. 

The second fundamental form of a general parametric surface is defined as follows: Regularity of the parametrization means that $\Br_s$ and $\Br_t$ are linearly independent for any $(s, t)$ in the domain of $\Br$, and hence span the tangent plane to $M$ at each point. Equivalently, the cross product $\Br_s \times \Br_t$ is a nonzero vector normal to the surface. The parametrization thus defines a field of unit normal vectors $\Bn$: 
$$
\Bn=\frac{\Br_s \times \Br_t}{|\Br_s \times \Br_t|}. 
$$
Then the second fundamental form is usually written as
\begin{align}
{\displaystyle \mathrm {I\!I} =L\,ds^{2}+2M\,ds\,dt+N\,dt^{2}. } 
\end{align}
Here the coefficients $L, M, N$ at a given point in the parametric $st$-plane are given by the projections of the second partial derivatives of $\Br$ at that point onto the normal line to $M$ and can be computed with the aid of the dot product as follows: 
$$
{\displaystyle L=\mathbf {r} _{ss}\cdot \mathbf {n} ,\quad M=\mathbf {r} _{st}\cdot \mathbf {n} ,\quad N=\mathbf {r} _{tt}\cdot \mathbf {n} .} 
$$
Under these notations, we can denote the Gaussian curvature $K$ and the mean curvature $H$ as  
$$
K:=\det A=\frac{LN-M^2}{EG-F^2}, \quad H:=\frac{1}{2}{\rm tr} A 
$$
where
$
A={\displaystyle {\begin{pmatrix}E&F\\F&G\end{pmatrix}}}^{-1} {\displaystyle {\begin{pmatrix}L&M\\M&N\end{pmatrix}}}
$ is the Weingarten matrix. 

We also recall the conformal parameters used in this paper:  
The parameter $(s, t)$ is said to be isothermal or conformal if the first fundamental form 
is written as
\begin{align}
 \mathrm {I}=e^{2\sigma}(ds^2+dt^2)\quad \text{(i.e. $E=G=e^{2\sigma}$,\ $F=0$)}
\end{align}
where $\sigma:=\sigma(s, t)$ is a $C^2$ function in $(s, t)$. It is emphasized that one can always take 
such coordinates without loss of regularity \cite{DK}. For the isothermal parameters, we find 
$$
K=\frac{LN-M^2}{E^2}, \quad H=\frac{L+N}{2E}  
$$
and the Gauss-Bonnet formula 
$$
\int_{M} K\; dS=2\pi \chi(M)
$$
holds true even for $C^2$ oriented compact surfaces \cite{Res}.

\subsection{Schatten class}
Let $K$ be a compact operator in separable Hilbert space $H$. We denote the singular values $\{s_j(K) \}$ as the family of eigenvalues of $(K^* K)^{1/2}$. Since the singular values are non-negative, we always assume the singular values are non-increasing:   
\beq
\sigma_{sing}(K)=\{\; s_j(K) \mid s_1(K) \geq s_2(K) \geq s_3(K) \geq\; \cdots\}. 
\eeq
Then the Schatten $p$-norm of $K$ is defined by 
$$\Vert K\Vert_{S^p} := {\rm{tr}}(K^*K)^{p/2}=\sum_{j=1}^{\infty} |s_j(K)|^{p}$$ 
and the $p$-th Schatten-class operator is a bounded linear operator on a Hilbert space 
with finite Schatten $p$-norm. Especially for $p=2$, $2$nd Schatten-class operator 
is so-called Hilbert-Schmidt class operator. Here we can show the convergence rate 
of singular values of the $p$-th Schatten class operators:  
\begin{lemma}\label{weak Schatten lemma}
If $K$ is in $p$-th Schatten class,  then ordered singular values satisfy 
$$
s_j(K)=o(j^{-1/p}) .
$$
\end{lemma}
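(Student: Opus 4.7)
The plan is to exploit the convergence of the series $\sum_{j=1}^{\infty} s_j(K)^p$ together with the monotonicity of the singular values. The naive estimate $j\, s_j(K)^p \le \sum_{k=1}^j s_k(K)^p \le \|K\|_{S^p}^p$ immediately yields the weaker statement $s_j(K) = O(j^{-1/p})$, so the real task is to upgrade this to little-$o$.

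To achieve the sharper bound, I would isolate a tail sum. Since $K \in S^p$, the tail
\[
R_m := \sum_{k=m}^{\infty} s_k(K)^p
\]
tends to $0$ as $m \to \infty$. Fix $j \ge 2$ and let $m = \lfloor j/2 \rfloor$. By the ordering $s_m(K) \ge s_{m+1}(K) \ge \cdots \ge s_j(K)$, the $j - m + 1 \ge j/2$ terms indexed by $k = m, m+1, \ldots, j$ are each at least $s_j(K)$, hence
\[
\tfrac{j}{2}\, s_j(K)^p \;\le\; \sum_{k=m}^{j} s_k(K)^p \;\le\; R_m.
\]
Since $R_m \to 0$ as $j \to \infty$, we conclude $j\, s_j(K)^p \to 0$, i.e.\ $s_j(K) = o(j^{-1/p})$.

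There is essentially no technical obstacle here; the only subtlety is recognizing that the cheap pigeonhole estimate over $\{1,\ldots,j\}$ only gives $O(j^{-1/p})$, whereas restricting to the tail block $\{m,\ldots,j\}$ with $m \asymp j/2$ turns the finite bound $\|K\|_{S^p}^p$ into a vanishing quantity, which is precisely what upgrades the big-$O$ to little-$o$. The argument is purely about summability of a non-increasing positive sequence and does not use any Hilbert space structure beyond the definition of $s_j(K)$.
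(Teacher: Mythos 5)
Your proof is correct and uses essentially the same argument as the paper: bound $s_j(K)^p$ times a block of roughly $j/2$ indices by the tail of the convergent series $\sum s_k(K)^p$, which vanishes and so upgrades the trivial $O(j^{-1/p})$ bound to $o(j^{-1/p})$. The paper phrases it via an explicit $\epsilon$--$N$ choice and then restricts to $n>2N$, but this is the identical idea.
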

\begin{proof}
$$
\Vert K\Vert_{S^p}^p=\sum_{j=1}^{\infty} |s_j(K)|^{p}<\infty.
$$
Thus, for all $\epsilon>0$, there exists $N\in \Nbb$ such that 
$$
(n-N)|s_n(K)|^{p} \leqq \sum_{N+1}^{n} |s_j(K)|^p<\epsilon,
$$
and hence, 
$$
n|s_n(K)|^{p}< 2\epsilon \quad \mbox{for all}\; n>2N.  
$$
Accordingly, $s_j(K)=o(j^{-1/p})$ as desired. 
\end{proof}
The class of operators $K$, which satisfy $s_j(K)=o(j^{-1/p})$, 
is called weakly $p$th-Scahtten class \cite{Si}. Thus if we know the class of operators, 
then the upper bounds of decay rates are obtained. We also have the precise relation 
between singular- and eigenvalues, which is convenient to derive NP eigenvalues 
from singular values:  
\begin{prop}\label{almost self-adjoint} Let $K$ be a compact operator. Assume the following {\rm(1)-(3):}  

{\rm(1)}\ $K-K^*$ is in Hilbert-Schmidt class. 

{\rm(2)}\ Eigenvalues of $K$ consist of real values. 

{\rm(3)}\ $s_j(K) \sim C j^{-1/2}\quad\text{as}\ j\rightarrow \infty$. 

Then $|\lambda_j(K)| \sim s_j(K) \sim Cj^{-1/2}\quad\text{as}\ j\rightarrow \infty$.
\end{prop}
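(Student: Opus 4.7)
The plan is to combine Ringrose's upper-triangular representation of compact operators with Ky~Fan's singular value inequality, feeding in Lemma~\ref{weak Schatten lemma} for the off-diagonal part.

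First, by Ringrose's theorem there is an orthonormal basis $\{e_j\}$ in which $K$ is upper triangular, with diagonal entries $\langle K e_j, e_j\rangle = \lambda_j(K)$ arranged in decreasing modulus and counted with algebraic multiplicity. Write $K = D + N$ where $D e_j = \lambda_j(K) e_j$ is the (normal) diagonal part and $N$ is the strictly upper triangular remainder. Hypothesis~(2) forces $D^* = D$, so $K - K^* = N - N^*$; since $N$ is strictly upper triangular an entrywise computation yields $\|N - N^*\|_{HS}^2 = 2\|N\|_{HS}^2$, and hypothesis~(1) then gives
$$
\|N\|_{HS}^2 = \tfrac{1}{2}\|K - K^*\|_{HS}^2 < \infty.
$$
Hence $N$ is Hilbert--Schmidt, and Lemma~\ref{weak Schatten lemma} with $p = 2$ yields $s_k(N) = o(k^{-1/2})$ as $k \to \infty$.

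Since $D$ is normal with $|\lambda_j(K)|$ arranged decreasingly along the diagonal, $s_j(D) = |\lambda_j(K)|$. Applying Ky~Fan's inequality $s_{i+k-1}(A+B) \le s_i(A) + s_k(B)$ with $(A,B) = (D,N)$ and with $(A,B) = (K,-N)$ gives, respectively,
$$
s_{j+k-1}(K) \le |\lambda_j(K)| + s_k(N), \qquad |\lambda_{j+k-1}(K)| \le s_j(K) + s_k(N).
$$
Fix $\epsilon > 0$ and choose $k = \lfloor \epsilon j\rfloor$. Hypothesis~(3) combined with the quantitative form $s_k(N) = \epsilon^{-1/2} o(j^{-1/2})$ then reduces both estimates, after re-indexing $j + k - 1 \sim (1 + \epsilon)j$, to the asymptotic sandwich
$$
\frac{C}{\sqrt{1+\epsilon}}\bigl(1 + o(1)\bigr) \;\le\; \sqrt{j}\,|\lambda_j(K)| \;\le\; C\sqrt{1+\epsilon}\,\bigl(1 + o(1)\bigr) \quad (j \to \infty).
$$
Sending $\epsilon \to 0+$ collapses this to $\sqrt{j}\,|\lambda_j(K)| \to C$, i.e., $|\lambda_j(K)| \sim s_j(K) \sim C j^{-1/2}$.

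The main delicate point is the calibration of $k = \lfloor\epsilon j\rfloor$ in Ky~Fan: the bound $s_k(N) = o(k^{-1/2})$ must be exploited in the quantitative form $\epsilon^{-1/2} o(j^{-1/2})$ so that the constant $C$ survives the passage $\epsilon \to 0$ in both directions of the sandwich. A more routine issue is to verify that the Ringrose basis places the full spectrum of $K$, with algebraic multiplicities, on the diagonal in decreasing order of modulus, which is the standard infinite-dimensional Schur form for compact operators.
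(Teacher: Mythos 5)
Your overall strategy coincides with the paper's: split $K$ into a normal diagonal part $D$ carrying the eigenvalues plus a Volterra-type remainder $N$, show $N$ is Hilbert--Schmidt, then use Ky~Fan's inequality to sandwich $|\lambda_j(K)|$ between shifted singular values of $K$. Your calibration $k=\lfloor\epsilon j\rfloor$ in the Ky~Fan step is in fact more explicit and more convincing than the paper's one-line appeal to ``$V$ is a small perturbation,'' and the limsup/liminf sandwich is correct.

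The gap is in the very first step. Ringrose's triangularization theorem for a compact operator produces a maximal nest of invariant subspaces, not an orthonormal basis in which $K$ is upper triangular; the nest may have a continuous part (the unicellular Volterra operator is the standard example, and one can splice such a piece onto a self-adjoint operator with singular values $\sim j^{-1/2}$ so that hypotheses (1)--(3) all hold). When the nest is continuous there is no orthonormal Schur basis, and your entrywise identity $\|N-N^*\|_{HS}^2 = 2\|N\|_{HS}^2$ has no meaning. The paper sidesteps exactly this difficulty by invoking the Gohberg--Krein/West decomposition $K=D+V$ with $D$ compact \emph{normal}, $V$ compact \emph{quasinilpotent}, and $\sigma(D)=\sigma(K)$ --- a statement that is basis-free --- and then citing Gil's lemma that a compact quasinilpotent operator whose imaginary part is Hilbert--Schmidt must itself be Hilbert--Schmidt. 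Replacing your Ringrose/Schur step by West's decomposition plus Gil's lemma closes the gap, and the remainder of your argument (in particular $s_j(D)=|\lambda_j(K)|$ since $D$ is self-adjoint with the same nonzero spectrum, and the two-sided Ky~Fan estimate) goes through unchanged.
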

We call the operator $K$, satisfying (1)-(3), ``{\it almost self-adjoint operator}''. 
\begin{proof}
As it is well known \cite{GK}, for any compact operator $K$, there are a compact normal operator $D$ 
and a compact quasinilpotent operator $V$, such that
\beq K = D + V\quad \text{and}\quad \sigma(D) = \sigma(K).\eeq
Since the spectrum $\sigma(D)=\sigma(K)$ is real, $D$ is a compact self-adjoint operator and  
$$
K^{*}=D+V^{*}. 
$$
Thus $K-K^{*}=V-V^*$ is in Hilbert-Schmidt class. The compact quasinilpotent operator $V$ with Hilbert-Schmidt imaginary part $\Im (V)=\frac{V-V^*}{2i}$ is also in Hilbert-Schimdt class (See e.g. \cite[Lemma 6.5.1]{Gil1} and \cite{Gil2}). 

From Ky-Fan theorem (see e.g. \cite{Dostanic} and references therein) and Lemma \ref{weak Schatten lemma}, the Hilbert-Schmidt operator $V$ is considered as a small perturbation of $K$. Thus   
 $$|\lambda_j(K)|=|\lambda_j(D)|=s_j(D) \sim s_j(K) \sim C j^{-1/2}\quad\text{as}\ j\rightarrow \infty. $$
\end{proof}

\section{NP operators as pseudo-differential operators}\label{sec: symbol}
We study the asymptotics of singular numbers of $\Psi$DO's which correspond to compact operators of the form (\ref{definition of NP operators}). To consider an integral operator as a $\Psi$DO is technically convenient also because the asymptotics can be expressed directly in terms of symbols. The starting point for us is to construct the approximate $\Psi$DO of the NP operator $\Kcal_{\p\Omega}$ modulo Hilbert-Schmidt operators. This is done by using local coordinates. 

\subsection{NP operators and approximations}\label{sec: double symbol} 
Let $\Bx_0 \in \p\GO$ and choose open neighborhoods $U_j$ ($j=1,2,3$) of $\Bx_0$ in $\p\GO$ so that  $U_1\cup {U_2} \subset U_3$ and $U_3$ has a local parametrization as in section \ref{sec: surface geometry}.  Let $\Gvf_j$ ($j=1,2$) be a smooth functions such that $\mbox{supp}\ \Gvf_1 \subset U_1$ and $\mbox{supp}\ \Gvf_2 \subset U_2$. Such a situation allows us to taking the local coordinates $(x(s, t)), y(s, t), z(s, t))$ and the surface element is given by $d\Gs(s, t)=|(x_s, y_s, z_s)\times(x_t, y_t, z_t)|ds \wedge dt$. 
Thus we obtain

\begin{align*}
\Gvf_2 \Kcal_{\p \GO} [\Gvf_1 f] (\Bx) := \Gvf_2(\Bx) \int_{\Rbb^2} \Big[ & \frac{(x(s_1, t_1)-x(s_2, t_2)) (y_s z_t-z_s y_t)(s_1, t_1)}{4\pi |\Bx-\By|^{3}} \\
+& \frac{(y(s_1,t_1)-y(s_2, t_2))(z_s x_t-x_s z_t)(s_1, t_1)}{4\pi |\Bx-\By|^{3}} \\
+& \frac{(z(s_1,t_1)-z(s_2, t_2))(x_s y_t-y_s x_t)(s_1, t_1)}{4\pi |\Bx-\By|^{3}} \Big]
\Gvf_1(\By) f(\By) ds_1 \wedge dt_1. \\
\end{align*}

Remarking that $dx=x_s ds+ x_t dt$ , $dy=y_s ds+ y_t dt$ and $dz=z_s ds+ z_t dt$ on local charts, 
the metric of the surface is denoted as 
\begin{align*} dx^2+dy^2+dz^2 
&=(x_s^2+y_s^2+z_s^2) ds^2 +2(x_s x_t+y_s y_t+z_s z_t )ds\ dt+(x_t^2+y_t^2+z_t^2) dt^2 \\
&=E ds^2 + 2F ds\ dt +G dt^2.     
\end{align*} 
Letting  $y'=(s_1, t_1)$ and $x'=(s_2, t_2)$, we also find that
\begin{align*}
&|\Bx-\By|^2 \\
=&(x(s_1, t_1)-x(s_2, t_2))^2+(y(s_1, t_1)-y(s_2, t_2))^2+(z(s_1, t_1)-z(s_2, t_2))^2 \\
=&\{ E(x') (s_1-s_2)^2+2 F(x') (s_1-s_2)(t_1-t_2)+G(x') (t_1-t_2)^2 \}(1+O(|x'- y'|)),  
\end{align*}
and so 
\begin{align*}
&\frac{1}{|\Bx-\By|^{3}} \\ 
=& \frac{1}{[E(x') (s_1-s_2)^2+2 F(x') (s_1-s_2)(t_1-t_2)+G(x') (t_1-t_2)^2]^{3/2}}
(1+ O(|x'-y'|))  \\
=:& K(x', x'-y')+ E_1(x', y'). 
\end{align*}
Here  
$$
K(x', x'-y')=\frac{1}{[E(x') (s_1-s_2)^2+2 F(x') (s_1-s_2)(t_1-t_2)+G(x') (t_1-t_2)^2]^{3/2}}
$$ 
and  
$$
|E_1(x', y')| \lesssim |(s_1, t_1)-(s_2, t_2)|^{-2}.
$$

Similarly the numerator of the integral kernel $\Gvf_2 \Kcal_{\p \GO}[\Gvf_1 f] (\Bx)$ 
can be denoted as 
\begin{align*}
&(x(s_1, t_1)-x(s_2, t_2)) (y_s z_t-z_s y_t)(s_1, t_1) +(y(s_1,t_1)-y(s_2, t_2))(z_s x_t-x_s z_t)(s_1, t_1)\\
&+(z(s_1,t_1)-z(s_2, t_2))(x_s y_t-y_s x_t)(s_1, t_1) \\
=&(x_s(s_1-s_2)+x_t(t_1-t_2)+\frac{1}{2}(s_1-s_2)^2 x_{ss}+x_{st}(s_1-s_2)(t_1-t_2)+\frac{1}{2}x_{tt}(t_1-t_2)^2)(y_s z_t-z_s y_t)(s_1, t_1) \\
&+(y_s(s_1-s_2)+y_t(t_1-t_2)+\frac{1}{2}(s_1-s_2)^2 y_{ss}+y_{st}(s_1-s_2)(t_1-t_2)+\frac{1}{2}y_{tt}(t_1-t_2)^2)(z_s x_t-x_s z_t)(s_1, t_1)\\
&+(z_s(s_1-s_2)+z_t(t_1-t_2)+\frac{1}{2}(s_1-s_2)^2 z_{ss}+z_{st}(s_1-s_2)(t_1-t_2)+\frac{1}{2}z_{tt}(t_1-t_2)^2)(x_s y_t-y_s x_t)(s_1, t_1) \\ 
&+O(|x'-y'|^{2+\alpha}) \\
=&\frac{1}{2}(s_1-s_2)^2 \{x_{ss}(y_s z_t-z_s y_t) +y_{ss}(z_s x_t-x_s z_t)+z_{ss}(x_s y_t-y_s x_t) \}(s_1, t_1) \\
&+ (s_1-s_2)(t_1-t_2) \{ x_{st}(y_s z_t-z_s y_t) +y_{st}(z_s x_t-x_s z_t)+z_{st}(x_s y_t-y_s x_t) \}(s_1, t_1) \\
&+\frac{1}{2}(t_1-t_2)^2 \{x_{tt}(y_s z_t-z_s y_t) +y_{tt}(z_s x_t-x_s z_t)+z_{tt}(x_s y_t-y_s x_t) \}(s_1, t_1) 
\\ 
&+O(|x'-y'|^{2+\alpha}) \\
=&\frac{1}{2}\{ L(y') (s_1-s_2)^2 +2M(y') (s_1-s_2)(t_1-t_2)+N(y') (t_1-t_2)^2 \} |(x_s, y_s, z_s)\times(x_t, y_t, z_t)|
(s_1, t_1) \\
&+O(|x'-y'|^{2+\alpha}) \\
=&\frac{1}{2}\{ L(x') (s_1-s_2)^2 +2M(x') (s_1-s_2)(t_1-t_2)+N(x') (t_1-t_2)^2 \} |(x_s, y_s, z_s)\times(x_t, y_t, z_t)|
(s_1, t_1) \\
&+O(|x'-y'|^{2+\alpha}).  
\end{align*}
Here we used the second fundamental form:   
$$
{\displaystyle \mathrm {I\!I}}=L ds^2 +2 M ds\ dt +N dt^2   
$$
and
$$L(x')-L(y')=O(|x'-y'|^{\alpha}),\ M(x')-M(y')=O(|x'-y'|^{\alpha}),\ N(x')-N(y')=O(|x'-y'|^{\alpha}).$$ 
Summarizing the dominator and the numerator of the kernel, 
\begin{align*}
\Gvf_2 \Kcal_{\p \GO}[\Gvf_1 f] (\Bx)& \\
=\frac{1}{8\pi} \Gvf_2(\Bx)\int_{\Rbb^2} 
\Big[&\{ L(s_1-s_2)^2 +2M(s_1-s_2)(t_1-t_2)+N(t_1-t_2)^2 \}   \\
&|(x_s, y_s, z_s)\times(x_t, y_t, z_t)|
+O(|x'-y'|^{2+\alpha}) \Big] \{K(x', x'-y')(1 + O(|x'-y'|) \}
\Gvf_1(\By) f(\By) ds_2 \wedge dt_2 \\
=\frac{1}{8\pi}\Gvf_2(\Bx) \int_{\Rbb^2} \Big[&\{ L(s_1-s_2)^2 +2M(s_1-s_2)(t_1-t_2)+N(t_1-t_2)^2 \}
\\ 
&K(x', x'-y') |(x_s, y_s, z_s)\times(x_t, y_t, z_t)| + E_2(x', y') \Big] \Gvf_1(\By) f(\By) ds_2 \wedge dt_2 \\
=\frac{1}{8\pi}\Gvf_2(\Bx) \int_{\Rbb^2} \Big[&\{ L(s_1-s_2)^2 +2M(s_1-s_2)(t_1-t_2)+N(t_1-t_2)^2 \} \\ 
&K(x', x'-y') \Big] \Gvf_1(\By) f(\By) d\sigma(s, t) +\BH[f](\Bx) 
\end{align*}
where $\BH[f](\Bx)=\frac{1}{8\pi}\Gvf_2(\Bx) \int_{\Rbb^2}E_2(x', y')\Gvf_1(\By) f(\By) ds_2 \wedge dt_2$ and $E(x', y')=O(|x'-y'|^{1-\alpha})$.

It follows that $E_2(x', y')$ is in $L^{2}_{\text{loc}}(\Rbb^4)$ and $\BH$ is in Hilbert-Schmidt class by Mercer's theorem (See e.g. \cite{CH, Si}).   

\subsection{Symbols of the approximate NP operators}
From the calculations in the previous subsection, the NP operator on local charts is denoted as  
\begin{align*}
\Gvf_2 \Kcal_{\p \GO}[\Gvf_1 f] (\Bx) &\equiv \Gvf_2 \BP[\Gvf_1 f](\Bx) \\
&:=\frac{1}{8\pi}\Gvf_2(\Bx)\int_{\Rbb^2} \Big[\{ L(s_1-s_2)^2 \\
&\quad\quad+2M(s_1-s_2)(t_1-t_2)+N(t_1-t_2)^2 \}K(x', x'-y') \Big] \Gvf_1(\By) f(\By) d\sigma(s, t)  
\end{align*}
where the notation $\equiv$ stands for modulo Hilbert-Schmidt operators.  
Let us denote $\BP[f](\Bx)$ as the pseudo-differential operator (See e.g. \cite{Shubin, Ta} for the details). The classical $\Psi $DO is defined as
$$
Op(\sigma)[f](x) = \frac{1}{(2\pi)^2}\int_{\Rbb^2}\int_{\Rbb^2} \sigma(x, \xi) e^{i(x-y)\cdot \xi} f (y) dx d\xi 
$$
where $\sigma(x, \xi)$ is called the symbol of $Op(\sigma)$. Write $(x, y)$ instead of $(s, t)$ as usual. 
Our purpose is to get the $\Psi$DO representation of the operator $P^g_{jk}$ :  
\beq\label{partial pseudo-differential}
P_{jk}^g[f](x') :=\frac{1}{8\pi}  \int_{\Rbb^2} {(x_j-y_j)(x_k-y_k)}K(x', x'-y') f(\By) d\sigma 
\eeq
where  
$$
K(x', x'-y') = 
\frac{1}{[g_{11}(x')(x_1-y_1)^2+2 g_{12}(x')(x_1-y_1)(x_2-y_2)+g_{22}(x')(x_2-y_2)^2]^{3/2}}. 
$$ 
We can calculate the principal symbol of $P_{jk}^g$ with the aid of the {\it surface Riesz transforms} $R^g_k$:    
\beq\label{symbol of surface Riesz trans}
R_k^g[f](x') =\frac{1}{2\pi}  \int_{\Rbb^2} {(x_k-y_k)}K(x', x'-y') f(\By) dy'.
\eeq
Here $R_k^g$ is, in fact, a homogeneous pseudo-differential operator \cite{AKM}.  Let us recall the symbol of $R_k^g$ for the reader's convenience: 
\begin{lemma}\label{symbolRG}
For $f\in  C_0^{\infty}(\Rbb^2)$
\beq
R_k^g[f](x') =\frac{1}{(2\pi)^2}   \int_{\Rbb^4}\frac{-i}{\sqrt{\det (g_{kl}(x))}}\frac{\sum_{l} g^{kl}(x')\xi_l}{\sqrt{\sum_{k, l} g^{kl}(x')\xi_k \xi_l}} e^{(x'-y')\cdot \xi} f(\By) dy' d\xi.
\eeq
\end{lemma}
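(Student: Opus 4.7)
The plan is to extract the symbol of $R_k^g$ by testing it on a plane wave and then evaluating the resulting oscillatory integral through a linear change of variable that flattens the Riemannian quadratic form into the Euclidean one.

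Setting $z = x' - y'$ in \eqnref{symbol of surface Riesz trans} and writing $A(x') := (g_{ij}(x'))$, the operator with coefficients frozen at the base point $x'$ reads
\begin{equation*}
R_k^g[f](x') = \frac{1}{2\pi}\,\mathrm{p.v.}\!\int_{\Rbb^2} \frac{z_k}{(z\cdot A(x')z)^{3/2}}\, f(x'-z)\, dz.
\end{equation*}
Using the standard characterization $Op(\sigma)[e^{iy'\cdot \eta}](x') = \sigma(x',\eta)\, e^{ix'\cdot \eta}$, I would apply $R_k^g$ to the plane wave $e^{iy'\cdot \eta}$ and factor off $e^{ix'\cdot \eta}$, obtaining
\begin{equation*}
\sigma(x',\eta) = \frac{1}{2\pi}\,\mathrm{p.v.}\!\int_{\Rbb^2} \frac{z_k}{(z\cdot A(x')z)^{3/2}}\, e^{-iz\cdot \eta}\, dz.
\end{equation*}

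Next I would perform the linear substitution $w = A(x')^{1/2} z$. Under this map the anisotropic quadratic form collapses to $|w|^2$, the Lebesgue measure picks up the factor $(\det A(x'))^{-1/2}$, the numerator becomes $z_k = (A(x')^{-1/2})_{kl}\, w_l$, and the phase transforms as $z\cdot \eta = w\cdot \widetilde\eta$ with $\widetilde\eta := A(x')^{-1/2}\eta$. What remains is the classical planar Fourier identity underlying the $\Rbb^2$ Riesz transform,
\begin{equation*}
\mathrm{p.v.}\!\int_{\Rbb^2} \frac{w_l}{|w|^3}\, e^{-iw\cdot \widetilde\eta}\, dw = -2\pi i\, \frac{\widetilde\eta_l}{|\widetilde\eta|},
\end{equation*}
which I invoke in the tempered-distribution sense.

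Assembling these ingredients and using $(A^{-1/2})_{kl}(A^{-1/2})_{lm} = g^{km}(x')$ together with $|\widetilde\eta|^2 = \sum_{k,l} g^{kl}(x')\eta_k \eta_l$ converts the product $(A^{-1/2})_{kl}\widetilde\eta_l/|\widetilde\eta|$ into the quotient $\sum_l g^{kl}(x')\eta_l/\sqrt{\sum_{k,l} g^{kl}(x')\eta_k\eta_l}$ appearing in the statement, while the Jacobian contributes the prefactor $1/\sqrt{\det(g_{ij}(x'))}$. Substituting the resulting $\sigma(x',\xi)$ into the standard $\Psi$DO template
\begin{equation*}
Op(\sigma)f(x') = \frac{1}{(2\pi)^2}\int_{\Rbb^2}\!\int_{\Rbb^2} \sigma(x',\xi)\, e^{i(x'-y')\cdot \xi} f(y')\, dy'\, d\xi
\end{equation*}
then yields the asserted formula. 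The only point to handle with care is the conditional convergence of the singular kernel: the Fourier transforms must be taken in the principal-value/distributional sense, which is routine once one restricts to $f \in C_0^\infty(\Rbb^2)$ and symmetrically truncates the singularity at the origin.
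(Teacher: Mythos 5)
Your argument is correct and is essentially the same as the paper's: both proofs flatten the anisotropic quadratic form $z\cdot g(x')z$ by a linear change of variable, pick up the Jacobian factor $(\det g)^{-1/2}$, and then invoke the classical Euclidean Riesz transform Fourier identity to identify the symbol. The only cosmetic difference is that the paper constructs its flattening matrix $\widetilde{P(x')}$ by orthogonal diagonalization of $(g_{ij})$ followed by rescaling, whereas you use the symmetric positive square root $A(x')^{1/2}$; both choices satisfy $\widetilde{P}\widetilde{P}^{T}=A^{-1}$, which is all that is actually used.
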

\begin{proof}
The matrix (tensor) $\BG(x)=(g_{ij}(x))$ is symmetric and one can diagonalize via orthogonal matrices:  
\beq
P^{-1}(x) \BG(x) P(x)=
\begin{pmatrix}
\alpha^2(x) &  0 \\
 0 & \beta^2(x)
\end{pmatrix}
\eeq
and 
\beq
\begin{pmatrix}
\alpha^{-1}(x) &  0 \\
 0 & \beta^{-1}(x)
\end{pmatrix}
P^{-1}(x) \BG(x) P(x)
\begin{pmatrix}
\alpha^{-1}(x) &  0 \\
 0 & \beta^{-1}(x)
\end{pmatrix}=
\begin{pmatrix}
1 &  0 \\
0 & 1
\end{pmatrix}. 
\eeq
Since $\alpha(x)\beta(x)=\sqrt{\det (g_{kl}(x))}$,
putting $z_k=x_k-y_k$ $(k=1, 2)$ and 
$$
z=
\begin{pmatrix}
z_1 \\
z_2
\end{pmatrix}
=
P(x)
\begin{pmatrix}
\alpha^{-1}(x) &  0 \\
 0 & \beta^{-1}(x)
\end{pmatrix}
\begin{pmatrix}
\tilde{z_1} \\
\tilde{z_2}
\end{pmatrix}
=\widetilde{P(x)}\tilde{z}, 
$$
 (\ref{symbol of surface Riesz trans}) becomes 
\begin{align*}
R_k^g[f](x') &=\frac{1}{2\pi} \int_{\Rbb^2} \frac{[\widetilde{P(x')} \tilde{z}]_k}{|\tilde{z}|^3}\ f(\By) \frac{d\tilde{z}}{\alpha(x')\beta(x')} \\
&=\frac{1}{(2\pi)^2}\int_{\Rbb^2} \int_{\Rbb^2} \frac{-i}{\sqrt{\det (g_{kl}(x))}}  \frac{[\widetilde{P(x)} \widetilde{P(x)}^{T}  \xi]_k}{|\widetilde{P(x)}^{T}  \xi|} e^{i\xi'(x'-y')} f(\By)\; d\xi dy' \\
&=\frac{1}{(2\pi)^2}\int_{\Rbb^2} \int_{\Rbb^2} \frac{-i}{\sqrt{\det (g_{kl}(x'))}} 
\frac{\sum_{k} g^{kl}(x')\xi_l}{\sqrt{\sum_{j, k} g^{kl}(x)\xi_j \xi_k}}e^{i\xi'(x'-y')} f(\By)\; d\xi dy'. 
\end{align*}
as desired. 
\end{proof}

As a consequence of Lemma \ref{symbolRG}, we get the 
symbol of $P_{jk}^g$ as homogeneous pseudo-differential operators: 
\begin{lemma}\label{symbolP} 
For $f\in C_0^{\infty}(\Rbb^2)$, 
\beq
P_{jk}^g[f](x') =\frac{1}{(2\pi)^2}   \int_{\Rbb^4} \Big[\frac{(-1)^{i-j} \widehat{\xi_j} \widehat{\xi_k}}{4 \det (g_{ij}(x')) \{ \sqrt{\sum_{j, k} g^{jk}(x')\xi_j \xi_k}\}^3}\Big]e^{i(x'-y')\cdot \xi} f(\By) dy' d\xi
\eeq
Here $\widehat{\xi_1}=\xi_2$ and $\widehat{\xi_2}=\xi_1$. 
\end{lemma}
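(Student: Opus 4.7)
The strategy is to adapt the diagonalization argument of Lemma~\ref{symbolRG} and then perform one additional distributional Fourier transform on $\Rbb^2$ forced by the quadratic numerator $(x_j-y_j)(x_k-y_k)$. As a preliminary reduction, I would freeze the metric coefficients at the base point $x'$: the difference between $\sqrt{\det(g_{ab})(y')}$ and $\sqrt{\det(g_{ab})(x')}$ in the surface element $d\sigma$ contributes a kernel of order $|x'-y'|^{-1+\alpha}$, which is square-integrable and hence Hilbert--Schmidt, so is invisible at the level of the principal symbol.

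After freezing, I would apply the same linear change of variable $z=\widetilde{P}(x')\tilde z$ with $\widetilde{P}^{T}(g_{ab})\widetilde{P}=I$ that was used for the Riesz transform: the quadratic form collapses to $|\tilde z|^{2}$, the Jacobian is $dz=d\tilde z/\sqrt{\det(g_{ij})}$, the dual variable $\tilde\xi=\widetilde{P}^{T}\xi$ satisfies $|\tilde\xi|^{2}=\sum g^{ab}\xi_a\xi_b$, and $\widetilde{P}\widetilde{P}^{T}=(g^{ij})$. Writing $z_j z_k=\sum_{l,m}\widetilde{P}_{jl}\widetilde{P}_{km}\tilde z_l\tilde z_m$, the symbol reduces to a linear combination of the flat Fourier integrals $\int \tilde z_l\tilde z_m |\tilde z|^{-3} e^{-i\tilde\xi\cdot\tilde z}\,d\tilde z$. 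Using the pointwise identity $\tilde z_l\tilde z_m/|\tilde z|^{3}=\delta_{lm}|\tilde z|^{-1}-\partial_l\partial_m|\tilde z|$ together with the standard two-dimensional distributional Fourier transforms $\mathcal{F}[|\tilde z|^{-1}](\tilde\xi)=2\pi/|\tilde\xi|$ and $\mathcal{F}[|\tilde z|](\tilde\xi)=-2\pi/|\tilde\xi|^{3}$, each such integral evaluates to
\[
\int_{\Rbb^{2}}\frac{\tilde z_l\tilde z_m}{|\tilde z|^{3}}\,e^{-i\tilde\xi\cdot\tilde z}\,d\tilde z=2\pi\,\frac{\delta_{lm}|\tilde\xi|^{2}-\tilde\xi_l\tilde\xi_m}{|\tilde\xi|^{3}}.
\]

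The only nontrivial step is the final algebraic collapse, which must remove the auxiliary matrix $\widetilde{P}(x')$ and leave only the intrinsic data $g^{jk}$, $\det(g_{ij})$, $\xi$. Using $\widetilde{P}\widetilde{P}^{T}=(g^{ij})$ and $\tilde\xi=\widetilde{P}^{T}\xi$, the double sum equals
\[
g^{jk}\sum_{a,b}g^{ab}\xi_a\xi_b-\Bigl(\sum_a g^{ja}\xi_a\Bigr)\Bigl(\sum_b g^{kb}\xi_b\Bigr),
\]
and a short case check over $(j,k)\in\{(1,1),(2,2),(1,2)\}$ in two dimensions identifies this expression with $(-1)^{j-k}\widehat{\xi_j}\widehat{\xi_k}/\det(g_{ij})$, where $\widehat{\xi_1}=\xi_2$ and $\widehat{\xi_2}=\xi_1$. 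Combining the prefactors $1/(8\pi)$ from the definition of $P_{jk}^{g}$, $1/\sqrt{\det(g_{ij})}$ from the Jacobian, $\sqrt{\det(g_{ij})}$ from the surface element $d\sigma$, and $2\pi$ from the Fourier identity produces exactly the claimed symbol $(-1)^{j-k}\widehat{\xi_j}\widehat{\xi_k}/\bigl(4\det(g_{ij})\{\sqrt{\sum g^{jk}\xi_j\xi_k}\}^{3}\bigr)$. This algebraic simplification is the crux of the lemma; the rest is mechanical once Lemma~\ref{symbolRG} and the standard Fourier identities are in hand.
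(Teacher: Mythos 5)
Your approach is correct but genuinely different from the paper's, and both routes are sound; they differ at the first step. The paper factors the operator as $P_{jk}^g[f]=\tfrac{1}{4}R_k^g\bigl[\sqrt{\det(g)}\,(x_j-y_j)f\bigr]$ and then exploits the already-computed symbol of the surface Riesz transform from Lemma~\ref{symbolRG}: the extra factor $(x_j-y_j)$ in the kernel becomes $\tfrac{1}{i}\partial_{\xi_j}$ under the oscillatory integral, so the whole proof reduces to differentiating the symbol $-i\bigl(\det g\bigr)^{-1/2}\bigl(\sum_l g^{kl}\xi_l\bigr)\bigl(\sum g^{kl}\xi_k\xi_l\bigr)^{-1/2}$ in $\xi_j$ and simplifying; no new Fourier transform is needed, the $(-1)^{j-k}\widehat{\xi_j}\widehat{\xi_k}$ structure drops out of $g^{kj}g^{lm}-g^{kl}g^{jm}$ by the two-dimensional cofactor identity. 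You instead re-run the diagonalization of Lemma~\ref{symbolRG} from scratch, which forces a fresh distributional Fourier identity $\int_{\Rbb^2}\tilde z_l\tilde z_m|\tilde z|^{-3}e^{-i\tilde\xi\cdot\tilde z}\,d\tilde z=2\pi\,(\delta_{lm}|\tilde\xi|^2-\tilde\xi_l\tilde\xi_m)|\tilde\xi|^{-3}$ (your decomposition $\tilde z_l\tilde z_m/|\tilde z|^3=\delta_{lm}/|\tilde z|-\partial_l\partial_m|\tilde z|$ and the transforms $\mathcal{F}[|\tilde z|^{-1}]=2\pi/|\tilde\xi|$, $\mathcal{F}[|\tilde z|]=-2\pi/|\tilde\xi|^3$ are correct, as is the resulting constant $\tfrac{1}{8\pi}\cdot 2\pi=\tfrac14$), and then the same cofactor computation to kill the auxiliary matrix $\widetilde{P}$. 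The paper's route is shorter and reuses work; yours is more self-contained and makes explicit the Hilbert--Schmidt freezing of $\sqrt{\det g(y')}$ at $x'$, a step the paper performs tacitly when it substitutes $\sqrt{\det g(x)}$ inside the bracket. One cosmetic note: the exponent in the Lemma statement reads $(-1)^{i-j}$, which is a typo for $(-1)^{j-k}$; your case check $(j,k)\in\{(1,1),(2,2),(1,2)\}$ shows that $(-1)^{j-k}$ is the correct sign, consistent with the paper's own final line.
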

\begin{proof}
From Lemma \ref{symbolRG} and using the integration by parts as oscillatory integrals \cite{Shubin}, we have for all $f \in  C_0^{\infty}(\Rbb^2),$ 
\begin{align*}
P_{jk}^g[f](x') =& \frac{1}{4} R_k^g [ \sqrt{\det (g(x))}(x_j-y_j)f](x') \\
=&\frac{1}{(2\pi)^2}   \int_{\Rbb^4}{-i} \frac{\sum_{l} g^{kl}(x')\xi_l}{4 \sqrt{\sum_{k, l} g^{kl}(x')\xi_k \xi_l}}\frac{1}{i} \frac{\p}{\p\xi_j} e^{i(x'-y')\cdot \xi} f(\By) dy' d\xi \\
=&\frac{1}{(2\pi)^2}   \int_{\Rbb^4} \frac{\p}{\p\xi_j} \Big[\frac{\sum_{l} g^{kl}(x')\xi_l}{4 \sqrt{\sum_{k, l} g^{kl}(x')\xi_k \xi_l}}\Big] e^{i(x'-y')\cdot \xi} f(\By) dy' d\xi \\
=&\frac{1}{(2\pi)^2}   \int_{\Rbb^4} \Big[\Big\{\frac{g^{kj}(x')}{4 \sqrt{\sum_{k, l} g^{kl}(x')\xi_k \xi_l}}\Big\} \\
&\quad\quad\quad -\frac{1}{2}\Big\{\frac{\{\sum_{l} g^{kl}(x')\xi_l\}\{ 2\sum_{l} g^{jl}(x') \xi_l  \}}{4 \{\sqrt{\sum_{k, l} g^{kl}(x')\xi_k \xi_l}\}^{3}}\Big\} \Big] e^{i(x'-y')\cdot \xi} f(\By) dy' d\xi \\
=&\frac{1}{(2\pi)^2}   \int_{\Rbb^4} \Big[\frac{g^{kj}(x') (\sum_{k, l} g^{kl}(x')\xi_k \xi_l) 
-\{\sum_{l} g^{kl}(x')\xi_l \}\{ \sum_{l} g^{jl}(x') \xi_l  \} }{4 \{\sqrt{\sum_{k, l} g^{kl}(x')\xi_k \xi_l}\}^3}\Big] e^{i(x'-y')\cdot \xi} f(\By) dy' d\xi \\
=&\frac{1}{(2\pi)^2}   \int_{\Rbb^4} \Big[\frac{ \sum_{l, m} (g^{kj}(x')g^{lm}(x') -g^{kl}(x')g^{jm}(x')  ) \xi_l \xi_m}{4 \{\sqrt{\sum_{j, k} g^{jk}(x')\xi_j \xi_k}\}^3}\Big]e^{i(x'-y')\cdot \xi} f(\By) dy' d\xi \\
=&\frac{1}{(2\pi)^2}   \int_{\Rbb^4} \Big[\frac{(-1)^{i-j}(g^{11}g^{22} -g^{12}g^{21})\widehat{\xi_j} \widehat{\xi_k}}{4 \{\sqrt{\sum_{j, k} g^{jk}(x')\xi_j \xi_k}\}^3}\Big]e^{i(x'-y')\cdot \xi} f(\By) dy' d\xi \\
=&\frac{1}{(2\pi)^2}   \int_{\Rbb^4} \Big[\frac{(-1)^{i-j} \widehat{\xi_j} \widehat{\xi_k}}{4 \det (g_{ij}) \{ \sqrt{\sum_{j, k} g^{jk}(x')\xi_j \xi_k}\}^3}\Big]e^{i(x'-y')\cdot \xi} f(\By) dy' d\xi.
\end{align*}
as  desired. 
\end{proof}

Hence the principal symbol of $P_{jk}^g$ is a (strictly) homogeneous symbol of order $-1$ and the summation immediately yields the principal symbol of $\BP$: 

\begin{lemma}\label{symbolT}
Let $\p\GO$ be a bounded $C^{2, \alpha}$ surface. Then  
$$\BP \equiv Op\Big( \Big[    
\frac{L(x') \xi_2^2 -2M(x')\xi_1 \xi_2 +N(x')\xi_1^2}{4 \det (g_{ij}) \{ \sqrt{\sum_{j, k} g^{jk}(x')\xi_j \xi_k}\}^3} \Big]
\Big) \quad\mbox{modulo}\ \text{Hilbert-Schmidt operators}. 
$$
\end{lemma}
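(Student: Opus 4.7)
The strategy is to recognize $\BP$ as a linear combination of the three operators $P_{jk}^{g}$ analyzed in Lemma \ref{symbolP} and then to sum the three symbols with coefficients coming from the second fundamental form. By construction in Section \ref{sec: double symbol}, in local coordinates with $y' = (s_{1}, t_{1})$ and $x' = (s_{2}, t_{2})$,
$$
\BP \;=\; L(x')\,P_{11}^{g} \;+\; 2M(x')\,P_{12}^{g} \;+\; N(x')\,P_{22}^{g},
$$
because the bracketed numerator in the kernel of $\BP$ is already the quadratic form $L(x')(s_{1}-s_{2})^{2} + 2M(x')(s_{1}-s_{2})(t_{1}-t_{2}) + N(x')(t_{1}-t_{2})^{2}$, paired with the Jacobian $|(x_{s},y_{s},z_{s})\times(x_{t},y_{t},z_{t})|$ that defines $d\sigma$. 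The replacement of $L(y'),M(y'),N(y')$ by $L(x'),M(x'),N(x')$ has already been absorbed into the Hilbert--Schmidt error $\BH$ of the previous subsection.

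The second step is to plug in Lemma \ref{symbolP}. Using $\widehat{\xi_{1}} = \xi_{2}$, $\widehat{\xi_{2}} = \xi_{1}$, and the sign factor $(-1)^{i-j}$ (positive when $(j,k)\in\{(1,1),(2,2)\}$, negative when $(j,k)=(1,2)$), the three principal symbols read
\begin{align*}
\sigma(P_{11}^{g}) &= \frac{\xi_{2}^{2}}{4\det(g_{ij})\{\sqrt{\sum_{j,k} g^{jk}\xi_{j}\xi_{k}}\}^{3}},\\
\sigma(P_{12}^{g}) &= \frac{-\,\xi_{1}\xi_{2}}{4\det(g_{ij})\{\sqrt{\sum_{j,k} g^{jk}\xi_{j}\xi_{k}}\}^{3}},\\
\sigma(P_{22}^{g}) &= \frac{\xi_{1}^{2}}{4\det(g_{ij})\{\sqrt{\sum_{j,k} g^{jk}\xi_{j}\xi_{k}}\}^{3}}.
\end{align*}
Since left composition with a multiplication operator multiplies the principal symbol (any $\xi$-derivative of a function of $x'$ alone vanishes in the $\Psi$DO composition expansion), summing with coefficients $L,2M,N$ yields exactly
$$
\sigma(\BP) \;=\; \frac{L(x')\xi_{2}^{2} - 2M(x')\xi_{1}\xi_{2} + N(x')\xi_{1}^{2}}{4\det(g_{ij})\{\sqrt{\sum_{j,k} g^{jk}\xi_{j}\xi_{k}}\}^{3}},
$$
which is the claimed principal symbol.

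The one substantive obstacle, already cleared in Section \ref{sec: double symbol}, is the Hilbert--Schmidt control of the several remainders: the term $E_{1}$ from expanding $|\Bx-\By|^{-3}$, the $O(|x'-y'|^{2+\alpha})$ remainder from Taylor-expanding the numerator, and the Hölder error incurred by freezing $L,M,N$ at $x'$. In each case the $C^{2,\alpha}$ hypothesis with $\alpha > 0$ yields a residual kernel with local decay strictly better than $|x'-y'|^{-2}$, hence locally square integrable on $\Rbb^{2}\times\Rbb^{2}$, and Mercer's theorem (as cited earlier) delivers the Hilbert--Schmidt conclusion. Beyond this bookkeeping, Lemma \ref{symbolT} is just the three-term sum above.
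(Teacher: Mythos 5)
Your proposal is correct and follows the same route the paper takes: the paper states after Lemma \ref{symbolP} that ``the summation immediately yields the principal symbol of $\BP$,'' and you supply exactly that summation, $\BP = L(x')P_{11}^g + 2M(x')P_{12}^g + N(x')P_{22}^g$, together with the observation that left multiplication by a function commutes with the quantization and the correct reading of the sign factor $(-1)^{j+k}$ (the paper's $(-1)^{i-j}$ is a typo). One small slip in your closing sentence: on $\Rbb^2\times\Rbb^2$ the threshold for a kernel $|x'-y'|^{-\gamma}$ to be locally square integrable is $\gamma<1$, not $\gamma<2$; fortunately the actual residual kernels are $O(|x'-y'|^{\alpha-1})$ (after pairing the denominator error with the quadratic numerator), which does meet the correct threshold, so the Hilbert--Schmidt conclusion stands.
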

We remark that the above $\Psi$DO is defined even for $f \in {L^2}(\p\GO)$ since the localizations of $\Psi$DO on local coordinates coincide with the sum of (\ref{partial pseudo-differential}). 
    
\section{Weyl's law of compact pseudo-differential operators and NP operators}\label{sec:  Weyl law}
Let us introduce Weyl's law of singular values of the pseudo-differential operators with $C^{\alpha}$ smooth in $x$-variable. M. S. Birman and M. Z. Solomyak \cite{BS} showed the asymptotics under weak smoothness hypothesis both in the $x$- and $\xi$-variable  (See \cite{Dostanic, Grubb, RT} for recent progress). In our situation, we employ the results as the asymptotics of singular values of $-1$ homogeneous $\Psi$DO in two dimensions.: 
\begin{theorem}[\cite{Grubb} Theorem 2.1 and Theorem 2.5]\label{Grubb theorem}  
On a closed manifold $M$ of dimension $2$, let $P$ be defined in local coordinates 
from symbols $p(x, \xi)$ that are homogeneous in $\xi$ of degree $-1$. Assume that the symbols restricted to $\xi \in S^{m-1}=\{|\xi|=1\}$ are in $C(S^{m-1}, C^{\epsilon})$ for some $\epsilon$. 
Then 
$$
s_j(P) \sim C(\p\GO)^{1/2}j^{-1/2}   \quad  j\rightarrow \infty. 
$$
Here 
\beq
C(\p\GO)=\frac{1}{8\pi^2} \int_{S^{*}M} |\sigma_0(x, \xi)|^{2} dx\ d\xi,   
\eeq
$S^{*}M$ denotes the cosphere bundle and $\sigma_0$ is the principal symbol of $Op(\sigma(x, \xi))$. 
\end{theorem}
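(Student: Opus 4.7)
My plan is to reduce the claim to the Weyl law for the non-negative self-adjoint compact pseudo-differential operator $P^{*}P$, which has order $-2$ and principal symbol $q_0(x,\xi):=|\sigma_0(x,\xi)|^2$ (homogeneous of degree $-2$ in $\xi$ and of $C^{\epsilon}$ regularity in $x$), modulo operators of lower order that contribute only to a smaller Schatten class and, by Lemma \ref{weak Schatten lemma}, do not affect the $j^{-1/2}$ asymptotics. Since the ordered singular values satisfy $s_j(P) = \mu_j(P^{*}P)^{1/2}$, it suffices to prove $\mu_j(P^{*}P) \sim C(\p\GO)\, j^{-1}$ and take square roots.

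To identify the constant I would establish the Weyl counting asymptotic
\[
N(\lambda, P^{*}P) := \#\{j : \mu_j(P^{*}P) \geq \lambda\} \sim \frac{1}{(2\pi)^2}\,\mathrm{vol}\bigl\{(x,\xi)\in T^{*}M : q_0(x,\xi) \geq \lambda\bigr\}
\]
as $\lambda \downarrow 0$. Writing $\xi = r\omega$ with $r\ge 0$ and $\omega\in S^1$, the homogeneity of $\sigma_0$ reduces $q_0(x,\xi) \geq \lambda$ to $r \leq |\sigma_0(x,\omega)|\lambda^{-1/2}$, and a direct integration gives
\[
\mathrm{vol}\{q_0\geq \lambda\} \;=\; \int_M\!\int_{S^1}\!\int_0^{|\sigma_0(x,\omega)|\lambda^{-1/2}}\! r\,dr\,d\omega\,dx \;=\; \frac{1}{2\lambda}\int_{S^{*}M}|\sigma_0|^2\,dx\,d\omega,
\]
so $N(\lambda, P^{*}P) \sim C(\p\GO)/\lambda$. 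Inverting yields $\mu_j(P^{*}P) \sim C(\p\GO)/j$ and therefore $s_j(P) \sim C(\p\GO)^{1/2}\, j^{-1/2}$, which is exactly the stated asymptotic.

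The main obstacle is that the classical Weyl law for self-adjoint compact $\Psi$DOs assumes smooth coefficients, whereas our symbol is only $C^{\epsilon}$ in $x$. To overcome this I would follow a Birman--Solomyak style approximation scheme. First, localize via a partition of unity $\{\chi_i^2\}$ subordinate to coordinate charts, writing $P = \sum_{i,j}\chi_i P \chi_j$; the off-diagonal pieces with disjoint supports have smooth integral kernels and lie in every Schatten class, hence their singular values are $o(j^{-1/2})$ by Lemma \ref{weak Schatten lemma}. On each chart, mollify the $x$-dependence of the symbol to produce a smooth approximant $\sigma^{(\delta)}$, so that the standard Weyl law applies to $Op(\sigma^{(\delta)})$ with constant $C^{(\delta)}(\p\GO) \to C(\p\GO)$ as $\delta \to 0$ by dominated convergence. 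The remainder $Op(\sigma - \sigma^{(\delta)})$ has $x$-modulus of continuity $O(\delta^{\epsilon})$, and a classical Birman--Solomyak commutator/interpolation estimate forces its singular values to satisfy $s_j = o(j^{-1/2})$ uniformly in $\delta$. Passing $\delta\to 0$ and invoking Ky Fan's inequality (as in the proof of Proposition \ref{almost self-adjoint}) to propagate the stability of $j^{-1/2}$ asymptotics under such perturbations yields the result, with the chart-wise integrals adding up to the global constant $\tfrac{1}{8\pi^2}\int_{S^{*}M}|\sigma_0|^2\,dx\,d\omega$.
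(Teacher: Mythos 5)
The paper does not actually prove this statement: it is imported wholesale as a citation to Grubb (Theorems 2.1 and 2.5), so there is no in-paper argument to compare against. Your sketch is the standard route that underlies such Weyl-type results for operators with rough ($C^\epsilon$ in $x$) symbols: pass to the self-adjoint non-negative operator $P^{*}P$ of order $-2$, read off the asymptotics from the phase-space volume of $\{q_0\ge\lambda\}$, and handle the limited $x$-regularity by a Birman--Solomyak style mollification-and-approximation scheme. Your computation of the volume $\mathrm{vol}\{q_0\ge\lambda\}=\tfrac{1}{2\lambda}\int_{S^{*}M}|\sigma_0|^2$ and the resulting constant $C(\partial\Omega)=\tfrac{1}{8\pi^2}\int_{S^{*}M}|\sigma_0|^2$ are correct, and the reduction $s_j(P)=\mu_j(P^{*}P)^{1/2}$ is sound.

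There is, however, a misstatement in the approximation step that you should fix. You assert that the remainder $R^{(\delta)}:=Op(\sigma-\sigma^{(\delta)})$ satisfies $s_j(R^{(\delta)})=o(j^{-1/2})$ ``uniformly in $\delta$.'' That is not the right claim: for each fixed $\delta$, $R^{(\delta)}$ is still an order $-1$ operator and its singular values decay at the rate $j^{-1/2}$, not faster. What the Birman--Solomyak estimates give, and what the argument needs, is the \emph{small-coefficient} bound
$\limsup_{j\to\infty} j^{1/2}\,s_j(R^{(\delta)}) \le C\,\delta^{\epsilon}$,
which tends to $0$ as $\delta\to 0$ but is not little-$o$ for any fixed $\delta$. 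One then runs a two-sided limsup/liminf sandwich using Ky Fan's inequality
$s_{j+k-1}(A+B)\le s_j(A)+s_k(B)$
to show that the leading coefficient of $s_j(P)$ is squeezed between $C^{(\delta)}(\partial\Omega)^{1/2}\pm C\delta^{\epsilon}$, and finally lets $\delta\to 0$. As written, your claim collapses two different quantities and, taken literally, would render the mollification unnecessary. A smaller point: the off-diagonal pieces $\chi_i P\chi_j$ with disjoint supports do not have smooth kernels (the $x$-regularity is only $C^{\epsilon}$); what you actually need, and what is true, is that these kernels are bounded and supported away from the diagonal, hence Hilbert--Schmidt, which already yields $s_j=o(j^{-1/2})$ by Lemma~\ref{weak Schatten lemma}. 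Finally, the composition $P^{*}P$ and the identification of its principal symbol with $|\sigma_0|^2$ is itself a nontrivial step in the rough-symbol setting and should be flagged rather than used silently.
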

From Theorem \ref{Grubb theorem}, immediately we have the Weyl's law of singular values:   
\begin{lemma}\label{Weyl law singular values}
Let $\GO$ be a $C^{2, \alpha}$ bounded region. Then 
\beq
s_j(\Kcal_{\p \GO})\sim \Big\{\frac{3W(\p\GO) - 2\pi \chi(\p\GO)}{128 \pi} \Big\}^{1/2} j^{-1/2}\quad \text{as}\ j\rightarrow \infty 
\eeq
where $W(\p\GO)=\int_{\p\GO} H^2\; dS$ and $\chi(\p\GO)$ denotes, respectively, the Willmore energy and the Euler characteristic of the surface $\p\GO$.
\end{lemma}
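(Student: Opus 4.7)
The plan is to apply Theorem~\ref{Grubb theorem} to the principal-symbol representation of $\Kcal_{\p\GO}$ provided by Lemma~\ref{symbolT} and then to evaluate the resulting cosphere-bundle integral in isothermal coordinates. First I would cover $\p\GO$ by finitely many isothermal charts with a subordinate partition of unity and assemble the localised calculations of Section~\ref{sec: symbol} into a single classical $\Psi$DO of order $-1$ on $\p\GO$ whose principal symbol is given by Lemma~\ref{symbolT}. The error between this $\Psi$DO and $\Kcal_{\p\GO}$ is Hilbert--Schmidt, hence contributes only $o(j^{-1/2})$ to the singular values by Lemma~\ref{weak Schatten lemma}; a standard Ky--Fan triangle-inequality argument then shows the $j^{-1/2}$ leading asymptotic is unaffected. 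Since $\p\GO\in C^{2,\alpha}$ supplies $C^{\alpha}$ regularity of $L,M,N$, the hypotheses of Theorem~\ref{Grubb theorem} are in force, so it suffices to compute $C(\p\GO)=\tfrac{1}{8\pi^{2}}\int_{S^{*}\p\GO}|\sigma_{0}|^{2}\,dx\,d\xi$.

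Next I would evaluate this integral pointwise in isothermal charts, where $g_{ij}=e^{2\sigma}\delta_{ij}$, $\det g=e^{4\sigma}$, $g^{ij}=e^{-2\sigma}\delta^{ij}$, and the fibre $|\xi|_{g}=1$ is parametrised by $\xi=e^{\sigma}(\cos\theta,\sin\theta)$, $\theta\in[0,2\pi)$, with arc-length $d\theta$. Substituting into Lemma~\ref{symbolT} collapses the denominator and yields
\[
\sigma_{0}=\frac{L\sin^{2}\theta-2M\sin\theta\cos\theta+N\cos^{2}\theta}{4e^{2\sigma}}.
\]
A routine trigonometric integration (using $\int_{0}^{2\pi}\sin^{4}=\int_{0}^{2\pi}\cos^{4}=3\pi/4$, $\int_{0}^{2\pi}\sin^{2}\cos^{2}=\pi/4$, and the vanishing of odd-power integrals) then gives
\[
\int_{0}^{2\pi}|\sigma_{0}|^{2}\,d\theta=\frac{\pi\bigl[3(L+N)^{2}-4(LN-M^{2})\bigr]}{64\,e^{4\sigma}}=\frac{\pi(3H^{2}-K)}{16},
\]
where the second equality uses the isothermal identities $L+N=2He^{2\sigma}$ and $LN-M^{2}=Ke^{4\sigma}$.

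Finally, integrating over $\p\GO$ against the area element $dS=e^{2\sigma}\,ds\,dt$ and invoking the Gauss--Bonnet theorem $\int_{\p\GO}K\,dS=2\pi\chi(\p\GO)$ yields
\[
\int_{S^{*}\p\GO}|\sigma_{0}|^{2}\,dx\,d\xi=\frac{\pi}{16}\bigl(3W(\p\GO)-2\pi\chi(\p\GO)\bigr),
\]
so that $C(\p\GO)=(3W(\p\GO)-2\pi\chi(\p\GO))/(128\pi)$, which is precisely the constant in the statement. The main technical obstacle is the first step: verifying that the symbol formula of Lemma~\ref{symbolT} really does describe a coordinate-invariant principal symbol of a globally defined $\Psi$DO on $\p\GO$, so that Grubb's intrinsically defined cosphere-bundle integral has the meaning used here; once that bookkeeping is settled, the remaining trigonometric evaluation and Gauss--Bonnet application are routine.
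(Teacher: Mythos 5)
Your proposal is correct and matches the paper's proof: both pass from Lemma~\ref{symbolT} to Theorem~\ref{Grubb theorem} after absorbing the Hilbert--Schmidt remainder via the Ky--Fan argument, and both compute $C(\p\GO)$ in isothermal coordinates, ending with the Gauss--Bonnet substitution. The only cosmetic difference is that you parametrize the cosphere fibre as the metric sphere $|\xi|_g=1$ weighted by the surface measure $dS=e^{2\sigma}\,ds\,dt$, while the paper uses the Euclidean circle $|\xi|=1$ with the coordinate Lebesgue measure $dx$; the $(-1)$-homogeneity of the symbol makes these two conventions give the same value of $C(\p\GO)$, as your computation confirms.
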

\begin{proof}
In the preceding section, we proved that $\Kcal_{\p \GO}$ is a $\Psi$DO modulo Hilbert-Schmidt class:  
\beq
\Kcal_{\p \GO} \equiv Op\Big( \Big[    
\frac{L(x) \xi_2^2 -2M(x)\xi_1 \xi_2 +N(x)\xi_1^2}{4 \det (g_{ij}) \{ \sqrt{\sum_{j, k} g^{jk}(x)\xi_j \xi_k}\}^3} \Big]
\Big) \quad \text{modulo Hilbert-Schimidt operator}\ \BH.
\eeq
From Ky-Fan theorem \cite{Dostanic}, $\BH$ is considered as the small perturbation of the $\Psi$DO. 
Thus $s_j(\Kcal_{\p \GO})$ also satisfies 
$$s_j(\Kcal_{\p \GO}) \sim C(\p\GO) j^{-1/2}   \quad  j\rightarrow \infty. $$
To calculate the positive constant $C(\p\GO)$ in Theorem \ref{Grubb theorem}, we take the isothermal charts introduced in section \ref{sec: surface geometry}. The surface element is given by $dS_{\Bx}=E(x) dx$ and  
\begin{align*}
C(\p\GO)^2&=\frac{1}{8\pi^2}\int_{\p\GO} \int_{S^1} \Big[\frac{L(x) \xi_2^2 -2M(x)\xi_1 \xi_2 +N(x)\xi_1^2}{4 \det (g_{ij}) \{ \sqrt{\sum_{j, k} g^{jk}(x)\xi_j \xi_k}\}^3} \Big]^2\; d\xi dx \\
&=\frac{1}{8\pi^2} \int_{\p\GO} \int_{S^1}\Big[\frac{L(x) \cos^2 \theta -2M(x)\cos \theta \sin \theta + N(x) \sin^2 \theta}{4 E^2(x) E^{-3/2}(x)} \Big]^2\; d\xi dx \\
&=\frac{1}{128\pi^2} \int_{\p\GO} \int_{S^1} \frac{ (L(x) \cos^2 \theta -2M(x)\cos \theta \sin \theta + N(x) \sin^2 \theta)^2}{E(x)} \; d\xi dx \\
&=\frac{1}{128\pi^2} \int_{\p\GO} \frac{ (\frac{3\pi}{4}L^2(x)+\frac{3\pi}{4}N^2(x)+\pi M^2(x) + \frac{\pi}{2}L(x)N(x)) }{E(x)} \; dx \\
&=\frac{1}{128\pi^2} \int_{\p\GO} \frac{ (\frac{3\pi}{4}L^2(x)+\frac{3\pi}{4}N^2(x)+\pi (L(x)N(x)-E^2(x)K(x)) + \frac{\pi}{2}L(x)N(x)) }{E(x)} \; dx \\
&=\frac{3}{512 \pi} \int_{\p\GO} \Big[\left(\frac{L(x)+N(x)}{E(x)} \right)^2 -\frac{4}{3} K(x) \Big]{E(x)} \; dx \\
&=\frac{3}{512 \pi} \int_{\p\GO} 4H^2(x)\; dx - \frac{1}{64} \chi (\p\GO) \\ 
&=\frac{3W(\p\GO) - 2\pi \chi(\p\GO)}{128 \pi}.  
\end{align*}
Thus we have a Weyl's type formula for singular values of NP operators. 
\end{proof}

\begin{proof}[Proof of Theorem \ref{main}]
From Proposition \ref{almost self-adjoint}, we need to prove only the almost self-adjointness of $\Kcal_{\p\GO}$, that is, 
$$ \Kcal^*_{\p\GO}-\Kcal_{\p\GO} $$
is a Hilbert-Schimidt operator. 
This fact follows from that the approximate $\Psi$DO's of $\Kcal^*_{\p\GO}$ and $\Kcal_{\p\GO}$ 
have just the same symbol from the similar calculations in section \ref{sec: symbol}.  
\end{proof}
\section{Applications}\label{sec: applications} 
Theorem \ref{main} states that the NP operators in three dimensions have infinite rank. 
Regarding the rank of NP operators, 
Khavinson-Putinar-Shapiro \cite{KPS} propose a question:  
 
``{\it The disk is the only planar domain for which the NP operator has finite rank. 
It is not known whether there are such domains in higher dimensions.}'' 

Our answer to this question is summarized as the following:
\par 
\begin{cor}[Finite-rank problem]
Let $\Omega$ be a bounded $C^{2, \alpha}$ {\rm($\alpha>0$)} region 
in $\Rbb^n$ {\rm($n=2, 3$)}. If the NP operator has finite rank, 
then
$$
n=2\ \mbox{and}\ \p\GO=S^1.
$$
Thus the finite rank NP operator is rank one. 
\end{cor}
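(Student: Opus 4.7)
My plan is to split the argument by dimension, using Theorem \ref{main} to rule out the three-dimensional case and then citing the known two-dimensional classification.

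\medskip
\noindent\textbf{Step 1 (The case $n=3$ is impossible).} By Theorem \ref{main}, the eigenvalues of $\Kcal_{\p\GO}$ satisfy
\[
|\lambda_j(\Kcal_{\p\GO})| \sim \Big\{\tfrac{3W(\p\GO)-2\pi\chi(\p\GO)}{128\pi}\Big\}^{1/2} j^{-1/2},
\]
so the NP operator has finite rank if and only if the leading constant vanishes, i.e.\ $3W(\p\GO)=2\pi\chi(\p\GO)$. I would show this equality is impossible. By the classical Willmore inequality, every closed immersed surface $\Sigma\subset\Rbb^3$ satisfies $\int_\Sigma H^2\, dS \ge 4\pi$, with equality only for round spheres. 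Also, any closed orientable surface component has $\chi(\Sigma)=2-2g \le 2$. Summing over the (finitely many) connected components of $\p\GO$, I would obtain
\[
3W(\p\GO) - 2\pi\chi(\p\GO) \;=\; \sum_{\Sigma} \bigl(3W(\Sigma) - 2\pi \chi(\Sigma)\bigr) \;\ge\; \sum_{\Sigma}(12\pi - 4\pi) \;=\; 8\pi\,(\text{\# components}) \;>\;0.
\]
Hence the asymptotic constant is strictly positive, $|\lambda_j(\Kcal_{\p\GO})|\neq 0$ for infinitely many $j$, and $\Kcal_{\p\GO}$ cannot have finite rank when $n=3$.

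\medskip
\noindent\textbf{Step 2 (The case $n=2$).} Here I would simply invoke the two-dimensional result of Khavinson--Putinar--Shapiro \cite{KPS}: among planar $C^{1,\alpha}$ domains, only the disk yields a finite-rank NP operator. Since the boundary is connected in the planar bounded case, this forces $\p\GO = S^1$ (up to rigid motion and scaling).

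\medskip
\noindent\textbf{Step 3 (The rank equals one).} For $\p\GO = S^1$, one can verify directly (for instance from the Poincar\'e-type explicit diagonalization already mentioned in the introduction, or from a trivial computation of the kernel of $\Kcal_{\p\GO}$ on a disk) that all nonzero eigenvalues collapse to the single eigenvalue $1/2$ with the constant eigenfunction; every other eigenvalue vanishes. Hence the rank is exactly one.

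\medskip
\noindent\textbf{Main obstacle.} The substantive content is Step 1, and the only delicate ingredient there is the Willmore inequality $\int_\Sigma H^2\,dS \ge 4\pi$ for every closed component. This is classical but must be cited carefully (Willmore's original proof, or Chen's extension to higher codimension). Once that inequality is in hand together with $\chi\le 2$ per component, the strict positivity of $3W-2\pi\chi$ follows without further work, and the rest of the corollary reduces to quoting \cite{KPS} and an explicit computation on the disk.
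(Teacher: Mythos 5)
Your proposal is correct and follows essentially the same route the paper intends: Theorem \ref{main} together with the Willmore inequality $W(\p\GO)\ge 4\pi$ and the bound $\chi(\p\GO)\le 2$ forces the asymptotic constant $3W-2\pi\chi$ to be strictly positive, hence infinitely many nonzero eigenvalues in dimension three, while the two-dimensional case is exactly the known Khavinson--Putinar--Shapiro classification with the disk giving rank one. Your extra care in summing over connected components and verifying the rank-one claim on $S^1$ by computing the (constant) kernel is a slight sharpening but does not change the argument.
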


Some results are also obtained from the known facts of the Willmore energy:  
The Willmore energy (\ref{definition of Willmore energy}) is known as the best thought of as a measure of `roundness', it is not hard to prove 
$$
W(\p\GO) \geq 4\pi, 
$$
with equality if and only if $\p\GO$ is an round sphere. 
For higher genus cases, F. C. Marques and A. Neves \cite{MN} proved 
the celebrated  ``Willmore conjecture''
$$
W(\p\GO) \geq 2\pi^2. 
$$
The equality is achieved by the torus of revolution whose generating circle
has radius $1$ and center at distance $\sqrt{2}$ from the axis of revolution:
$$
T^2_{\text{Clifford}}:=\{ ((\sqrt{2}+\cos u)\cos v, (\sqrt{2}+\cos u)\sin v, \sin u) \in {\Rbb}^3\; |\; (u, v) \in {\Rbb}^2  \}
$$
As results, we obtain the spectral geometry nature of NP eigenvalues: 
\begin{cor}\label{Isospectral property of sphere}
Let $\GO \subset \Rbb^3$ be a bounded region of class $C^{2, \alpha}$. Then 
$$\hspace{-6mm}|\lambda_j(\Kcal_{\p\GO})| \succsim \frac{1}{4} j^{-1/2}. $$
The minimum asymptotic is achieved if and only if $\partial \Omega=S^2$. Especially if 
$\sigma_p(\Kcal_{\p\GO})=\sigma_p(\Kcal_{S^2})$ then $\p\GO=S^2.$ 
\end{cor}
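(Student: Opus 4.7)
The plan is to combine the asymptotic formula of Theorem \ref{main} with the classical Willmore inequality $W(\Sigma)\geq 4\pi$ for closed orientable surfaces $\Sigma\subset\Rbb^3$ (with equality iff $\Sigma$ is a round sphere). Theorem \ref{main} gives
$$|\lambda_j(\Kcal_{\p\GO})|\sim C(\p\GO)^{1/2} j^{-1/2},\qquad C(\p\GO):=\frac{3W(\p\GO)-2\pi\chi(\p\GO)}{128\pi},$$
so the asserted bound $|\lambda_j(\Kcal_{\p\GO})|\succsim \tfrac{1}{4} j^{-1/2}$ reduces to the purely geometric inequality
$$3W(\p\GO)-2\pi\chi(\p\GO)\geq 8\pi,$$
and the equality case must coincide with $\p\GO=S^2$.

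Next I would decompose $\p\GO=\Sigma_1\sqcup\cdots\sqcup\Sigma_k$ into connected components. Both $W$ and $\chi$ are additive over this decomposition, so it suffices to prove the estimate componentwise. On each closed orientable $C^{2,\alpha}$ component the topological bound gives $\chi(\Sigma_i)=2-2g_i\leq 2$, and the Willmore inequality gives $W(\Sigma_i)\geq 4\pi$, hence
$$3W(\Sigma_i)-2\pi\chi(\Sigma_i)\geq 12\pi-4\pi=8\pi,$$
with equality forcing $W(\Sigma_i)=4\pi$ (and $g_i=0$), i.e., $\Sigma_i$ is a round sphere. Summing over $i$ yields $3W(\p\GO)-2\pi\chi(\p\GO)\geq 8\pi k\geq 8\pi$, and equality additionally forces $k=1$. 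Hence $\p\GO$ must be a single round sphere.

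For the isospectral statement, the multiset $\sigma_p(\Kcal_{\p\GO})$ determines the leading Weyl constant via $C(\p\GO)^{1/2}=\lim_{j\to\infty} j^{1/2}|\lambda_j(\Kcal_{\p\GO})|$. Thus $\sigma_p(\Kcal_{\p\GO})=\sigma_p(\Kcal_{S^2})$ forces $C(\p\GO)=C(S^2)=1/16$, and the equality analysis above yields $\p\GO=S^2$.

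The only substantive ingredient beyond Theorem \ref{main} is the classical Willmore inequality $W\geq 4\pi$ together with its rigidity; the rest is elementary bookkeeping over components and the topological bound $\chi\leq 2$. I do not expect a real obstacle: for higher-genus components ($\chi(\Sigma_i)\leq 0$) the estimate $3W(\Sigma_i)-2\pi\chi(\Sigma_i)\geq 12\pi>8\pi$ is automatic and strict by a wide margin, so the Marques--Neves result $W\geq 2\pi^2$ quoted in the text is a sharp refinement but is not required for this corollary.
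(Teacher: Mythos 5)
Your argument is correct and follows the same route the paper implicitly uses: combine the Weyl constant $C(\p\GO)=\frac{3W(\p\GO)-2\pi\chi(\p\GO)}{128\pi}$ from Theorem \ref{main} with the Willmore inequality $W\geq 4\pi$ (equality iff round sphere) and $\chi\leq 2$. The paper gives no written proof of the corollary beyond stating the Willmore inequality, so your componentwise bookkeeping (using additivity of $W$ and $\chi$, and noting that equality forces a single component with $W=4\pi$ and $\chi=2$) is a useful bit of rigor that the paper leaves implicit; the isospectrality step — the multiset of eigenvalues determines the Weyl constant, hence $C(\p\GO)=C(S^2)=1/16$, and rigidity finishes it — is exactly what the authors intend.
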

\begin{cor}\label{Isopectral property of Clifford Torus}
Let $\GO \subset \Rbb^3$ be a bounded region of class $C^{2, \alpha}$ with genus $g(\p\GO)\geq 1$.
Then $$|\lambda_j(\Kcal_{\p\GO})| \succsim \frac{\sqrt{3\pi}}{8} j^{-1/2}.$$ 
Especially if $\sigma_p(\p\GO)=\sigma_p(T^2_{\text{Clifford}})$ then $\p\GO\cong T^2_{\text{Clifford}}.$ 
Here $\cong$ means modulo M\"obius transforms.
\end{cor}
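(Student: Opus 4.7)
The plan is to combine the Weyl-type asymptotics of Theorem \ref{main} with the Marques--Neves resolution of the Willmore conjecture, already recalled in the paragraph preceding the corollary. From Theorem \ref{main} we have
\[
\lim_{j\to\infty} j^{1/2}|\lambda_j(\Kcal_{\p\GO})| = \Big(\frac{3W(\p\GO)-2\pi\chi(\p\GO)}{128\pi}\Big)^{1/2},
\]
so the first assertion reduces to the inequality $3W(\p\GO)-2\pi\chi(\p\GO)\geq 6\pi^2$. Under the hypothesis $g(\p\GO)\geq 1$, Gauss--Bonnet gives $\chi(\p\GO)=2-2g(\p\GO)\leq 0$, hence $-2\pi\chi(\p\GO)\geq 0$. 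Together with $W(\p\GO)\geq 2\pi^2$, this yields $3W(\p\GO)-2\pi\chi(\p\GO)\geq 3\cdot 2\pi^2 + 0 = 6\pi^2$, and taking the square root of $6\pi^2/(128\pi)=3\pi/64$ produces the asymptotic lower bound $(\sqrt{3\pi}/8)\,j^{-1/2}$.

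For the rigidity statement, assume $\sigma_p(\Kcal_{\p\GO})=\sigma_p(\Kcal_{T^2_{\text{Clifford}}})$. Since $W(T^2_{\text{Clifford}})=2\pi^2$ and $\chi(T^2_{\text{Clifford}})=0$, matching the asymptotic constants of Theorem \ref{main} for the two surfaces gives
\[
3\bigl(W(\p\GO)-2\pi^2\bigr)+\bigl(-2\pi\chi(\p\GO)\bigr)=0.
\]
Each summand on the left is non-negative by the argument in the previous paragraph, so both must vanish: $W(\p\GO)=2\pi^2$ and $\chi(\p\GO)=0$. The latter forces $g(\p\GO)=1$, and the former, combined with the equality case of the Marques--Neves theorem, yields $\p\GO\cong T^2_{\text{Clifford}}$ modulo M\"obius transformations.

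The main obstacle is external: the argument relies crucially on the deep Marques--Neves theorem, both for the inequality $W\geq 2\pi^2$ and for its rigidity part. Everything else is routine bookkeeping with Theorem \ref{main}. It is perhaps worth remarking that the hypothesis $\sigma_p(\Kcal_{\p\GO})=\sigma_p(\Kcal_{T^2_{\text{Clifford}}})$ enters the proof only through its consequence on the single asymptotic constant $\lim_j j^{1/2}|\lambda_j|$, so the conclusion actually holds under the substantially weaker hypothesis $|\lambda_j(\Kcal_{\p\GO})|\sim(\sqrt{3\pi}/8)\,j^{-1/2}$.
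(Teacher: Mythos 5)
Your proof is correct and follows the approach the paper clearly intends, namely combining the asymptotic constant of Theorem \ref{main} with the Marques--Neves bound $W \geq 2\pi^2$ and its rigidity statement, using $\chi(\p\GO)\leq 0$ from the genus hypothesis to make both summands in $3(W-2\pi^2)+(-2\pi\chi)$ nonnegative. The paper states the corollary without a written proof; your bookkeeping, including the computation $\sqrt{6\pi^2/(128\pi)}=\sqrt{3\pi}/8$ and the closing observation that only the asymptotic constant is needed, fills in exactly the argument the paper leaves implicit.
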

Any other properties of the Willmore energy can also be interpreted as the asymptotics of NP eigenvalues. 
For instance, Langevin and Rosenberg \cite{LR} showed that any knotted embedding of a torus in $\Rbb^3$
was bounded below by $8\pi$, namely, 
$$
{W(\p\GO) \geq 8\pi.} 
$$
In the terminology of NP eigenvalues, we have 
\begin{cor}\label{Decay estimate for knotted tori}
Let $\GO \subset \Rbb^3$ be a bounded region of class $C^{2, \alpha}$ and $\p\GO$ be a knot torus. 
Then
$$\hspace{-6mm}|\lambda_j(\Kcal_{\p\GO})| \succsim \frac{\sqrt{3}}{4} j^{-1/2}.$$
\end{cor}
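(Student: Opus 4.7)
The plan is to read off this corollary as a direct specialization of Theorem \ref{main}: once the Euler characteristic of the torus and a sharp lower bound for its Willmore energy are inserted into the asymptotic constant, the stated inequality is forced. The geometric input is the Langevin--Rosenberg theorem quoted in the discussion preceding the corollary.

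First I would apply Theorem \ref{main} to $\p\GO$, so that
$$|\lambda_j(\Kcal_{\p\GO})| \sim \Big\{\frac{3W(\p\GO) - 2\pi\chi(\p\GO)}{128\pi}\Big\}^{1/2} j^{-1/2}.$$
Because $\p\GO$ is a topological torus (genus $g=1$), its Euler characteristic is $\chi(\p\GO) = 2-2g = 0$, and the asymptotic coefficient collapses to $\{3W(\p\GO)/(128\pi)\}^{1/2}$. Next, since $\p\GO$ is knotted, the Langevin--Rosenberg bound $W(\p\GO) \geq 8\pi$ applies. Substituting these two facts gives
$$\Big\{\frac{3W(\p\GO) - 2\pi\chi(\p\GO)}{128\pi}\Big\}^{1/2} \geq \Big\{\frac{3 \cdot 8\pi}{128\pi}\Big\}^{1/2} = \Big\{\frac{3}{16}\Big\}^{1/2} = \frac{\sqrt{3}}{4},$$
and the corollary follows with the convention (already used for Corollaries \ref{Isospectral property of sphere} and \ref{Isopectral property of Clifford Torus}) that $\succsim$ denotes a lower bound on the leading asymptotic coefficient produced by Theorem \ref{main}.

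There is essentially no obstacle here: the proof is a two-line substitution into the already established Weyl law. The only things worth verifying are that $C^{2,\alpha}$ regularity suffices both for Theorem \ref{main} (assumed throughout the paper) and for the Langevin--Rosenberg inequality, which uses only the integrated squared mean curvature and is valid in this class; and that ``knot torus'' in the statement is interpreted, as in \cite{LR}, as a $C^{2,\alpha}$ embedding $\p\GO \hookrightarrow \Rbb^3$ whose image is a nontrivially knotted torus. No new analytic work is needed.
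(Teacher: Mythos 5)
Your proof is correct and is essentially the argument the paper intends: the corollary is stated immediately after the Langevin--Rosenberg bound $W(\p\GO)\geq 8\pi$ is quoted, so the intended derivation is precisely the substitution $\chi(\p\GO)=0$ and $W(\p\GO)\geq 8\pi$ into the asymptotic constant of Theorem \ref{main}, giving $\sqrt{3\cdot 8\pi/(128\pi)}=\sqrt{3}/4$. Your remarks about regularity and the meaning of $\succsim$ are sensible but not more than the paper itself supplies.
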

 
As the last applicaltion, let us consider plasomonic eigenvalues (See e.g. \cite{Grieser} and references therein). A real number $\Ge$ is called a {\it plasmonic eigenvalue} if the following problem admits 
a solution $u$ in the space $H^1(\Rbb^3)$:
\beq\label{plasmon}
\begin{cases}
\GD u =0 \quad\quad\quad\quad\, &\text{in}\  {\Rbb}^3\backslash \p\GO ,\\
u|_{-}=u|_{+} \quad\quad\quad\ &\text{on}\ \p\GO ,\\
\Ge\p_n u|_{-}= -\p_n u|_{+} \quad\ \, &\text{on}\ \p\GO.
\end{cases}
\eeq
where the subscript $\pm$ on the left-hand side respectively denotes the limit (to $\p\GO$) from the outside and inside of $\GO$.
The well-known relation \cite{AKMU} among the plasmonic eigenvalue $\Ge$ and 
the NP eigenvalue $\Gl$ gives 
\beq\label{plasmonic eigenvalues}
|\epsilon_j - 1| =\Big|\frac{-2\lambda_j}{\lambda_j-1/2}\Big|\sim |4\lambda_j | \sim  \Big\{\frac{3W(\p\GO) - 2\pi \chi(\p\GO)}{8 \pi} \Big\}^{1/2}  j^{-1/2}. 
\eeq
Hence the plasmonic eigenvalues consist of the sequence with $1$ as the limit, and the (R.H.S.) of \eqnref{plasmonic eigenvalues} gives its converging rate. 
\section{Conclusion and discussions}
We discussed about the Weyl's law of NP eigenvalues. It depends on the Willmore energy and the Euler characteristics. However Theorem \ref{main} holds only for $C^{2, \alpha}$ smooth surfaces, while $\Kcal_{\p\GO}$ is compact for $C^{1, \alpha}$ surfaces. This fact indicates that the Weyl's law on surfaces having only $C^{1, \alpha}$ smoothness is probably changed to be the interpolation between $C^{0, 1}$ and $C^{2, \alpha}$. 

Moreover we don't know the asymptotics of {\it signed} NP eigenvalues:  
\beq
\lambda_1^{+}>\lambda_2^{+}>\cdots >0 >\cdots > \lambda_{2}^{-}> \lambda_{1}^{-}. 
\eeq
When we denote the {\it signed} Browder-G\r{a}rding density \cite{Andreev} as 
\beq
C_{\pm}(\p\GO)=\frac{1}{8\pi^2} \int_{S^{*}\p\GO}\Big[ \frac{L(x') \xi_2^2 -2M(x')\xi_1 \xi_2 +N(x')\xi_1^2}{4 \det (g_{ij}) \{ \sqrt{\sum_{j, k} g^{jk}(x')\xi_j \xi_k}\}^3} \Big]_{\pm}^2  dx\ d\xi  
\eeq
where the subscript $\pm$ denotes the positive and negative part respectively, we believe that  
\begin{align*}
\lambda_j^{\pm} \sim \pm C_{\pm}^{1/2}(\p\GO) j^{-1/2}\quad \text{as}\ j\rightarrow \infty.  
\end{align*}
If so, for the case of ellipsoids, $C_{-}(\p\GO)=0$ and negative NP eigenvalues decay faster than $j^{-1/2}$ \cite{Ah2, AA, Ma, Ri2}.
For a torus, $C_{-}(\p\GO)>0$ and infinitely many negative NP eigenvalues exist. 

We hope that the truth or falsehood of these problems will be established in the recent future.


\begin{thebibliography}{11}
\bibitem{Ahl}
L. V. Ahlfors, 
{\it Remarks on the Neumann-Poincar\'e integral equation}, Pacific. J. Math., 
{\bf 2} (1952), 271--280.

\bibitem{Ah1}
J. F. Ahner, 
{\it Some spectral properties of an integral operator in potential theroy}, 
Proc. Edinburgh Math. Soc., {\bf 29} (1986), 405--411.

\bibitem{Ah2}
J. F. Ahner, 
{\it On the eigenvalues of the electrostatic integral operator II}, J. Math. Anal. Appl., 
{\bf 181} (1994), 328--334.

\bibitem{AA}
J. F. Ahner and R. F. Arenstorf, 
{\it On the eigenvalues of the electrostatic integral operator}, J. Math. Anal. Appl., 
{\bf 117} (1986), 187--197.

\bibitem{ACKLM}
H.~{Ammari}, G.~{Ciaolo}, H.~{Kang}, H.~{Lee} and G.~W.~{Milton}.
\newblock {Spectral theory of a Neumann-Poincar\'{e}-type operator and analysis of cloaking due to anomalous localized resonance},
\newblock {\em Arch. Ration. Mech. An.} {\bf 208} (2013), 667--692.

\bibitem{AKL}{H. Ammari, H. Kang, and H. Lee}, 
\newblock
{\sl Layer potential techniques in spectral analysis}, 
\newblock Mathematical Surveys and Monographs, {\bf 153} American Math. Soc., Providence RI, (2009).

\bibitem{AK-JMAA-16}
K.~{Ando} and H.~{Kang},
\newblock Analysis of plasmon resonance on smooth domains using spectral properties of the {N}eumann--{P}oincar{\'e} operator,
\newblock J. Math. Anal. Appl. {\bf 435}(1) (2016), 162--178.

\bibitem{AKM}
K. Ando, H. Kang and Y. Miyanishi, 
\newblock {Elastic Neumann-Poincar\'{e} operators on three dimensional smooth domains: Polynomial compactness and spectral structure}, 
\newblock Int. Math. Res. Notices, rnx258, (2017).

\bibitem{AKM2}
K. Ando, H. Kang, Y. Miyanishi, 
\newblock Exponential decay estimates of the eigenvalues for the Neumann--Poincar\'e operator on analytic boundaries in two dimensions, 
\newblock To appear in J. Integral Equations, arXiv:1606.01483.

\bibitem{AKMU}
K. Ando, H. Kang, Y. Miyanishi and E. Ushikoshi,  
\newblock {The first Hadamard variation of Neumann--Poincar\'e eigenvalues}, 
\newblock To appear in Proc. Amer. Math. Soc., arXiv:1805.02414.

\bibitem{Andreev}
A. S. Andreev,
\newblock {Asymptotics of the spectrum of compact pseudodifferential operators in a 
Euclidean domain},
\newblock Mat. Sbornik {\bf 137} (1988), 203--223; Math. USSR Sbornik {\bf 65} (1990). 

\bibitem{Bhatia} R. Bhatia, 
\newblock Some inequalities for norm ideals, 
\newblock Comm. Math. Phys., {\bf 111} (1987), 33--39. 

\bibitem{BS}
M. S. Birman and M. Z. Solomyak, 
\newblock Asymptotic behavior of the spectrum of pseudodifferential operators with 
anisotropically homogeneous symbols,
\newblock Vestnik Leningrad Univ. {\bf 13} (1977), 13--21; English translation in Vestin. Leningr. Univ. Math. {\bf 10}, 237--247.

\bibitem{Bl}
W. Blaschke, {\it Voresungen \"{U}ber Differentialgeometrie III}, Berlin: Springer (1929)

\bibitem{BM} 
J. Blumenfeld and W. Mayer, 
\newblock \"Uber poincar\'e fundamental funktionen, 
\newblock Sitz. Wien. Akad. Wiss., Math.-Nat. Klasse {\bf 122}, Abt. IIa (1914), 2011--2047.

\bibitem{BT-ARMA-13} E. Bonnetier and F. Triki, 
\newblock On the spectrum of Poincar\'e variational problem for two close-to-touching inclusions in 2D, 
\newblock Arch. Ration. Mech. An. {\bf 209} (2013), 541--567.

\bibitem{CH}
\newblock R. Courant and D. Hilbert, 
\newblock Methods of Mathematical Physics I, Wiley-Interscience, (1953). 

\bibitem{CM}
{R. R. Coifman} and {Y. Meyer},
\newblock{Au dul\`{a} des op\'{e}rateurs pseudodiff\'{e}rentiels,} 
\newblock{Asterisque} {\bf 57} (1978), 1--185.

\bibitem{DK}
D. Deturck and L. Kazdan, 
\newblock Some regularity theorems in Riemannian geometry, 
\newblock Ann. Sci. \'{E}c. Norm. Sup\'{e}r. (4), {\bf 14} (3) (1981), 249--260.

\bibitem{Dostanic}
M. Dostani\'c, 
\newblock A theorem of Ky-Fan type, 
\newblock Matemati\u{c}ki Vesnik, {\bf 47} (1995), 7--10.  

\bibitem{DR}
J. Delgado and M. Ruzhansky, 
\newblock Schatten classes on compact manifolds: Kernel conditions, 
\newblock J. Funct. Anal., {\bf 267} (2014), 772--798. 

\bibitem{Fredholm-03} E. I. Fredholm, Sur une classe d'equations fonctionnelles, Acta Mathematica, {\bf 27} (1903), 365--390.

\bibitem{Gil1} 
M. I. Gil', 
\newblock Lower bounds for eigenvalues of Schatten-Von Neumann operators, 
\newblock J. Inequal. Pure Appl. Math., {\bf 8}(3) (2007), Art. 66. 

\bibitem{Gil2}
M. I. Gil', 
\newblock Operator functions and localization of spectra, 
\newblock Lectures Notes in Mathematics, vol.1830, Springer-Verlag, Berlin, 2003.

\bibitem{GK}
I. C. Gohberg and M. G. Krein, 
\newblock Introduction to the Theory of Linear Nonselfadjoint Operators,
\newblock Trans. Mathem. Monographs, {\bf 18}: (1969) Amer. Math. Soc., Providence, R.I. 

\bibitem{Grieser}
\newblock D. Grieser, The plasmonic eigenvalue problem,
\newblock Rev. Math. Phys. {\bf 26} (2014), 1450005.

\bibitem{Grubb} 
G. Grubb, 
\newblock Spectral asymptotics for nonsmooth singular green operators, 
\newblock Comm. P. D. E., {\bf 39}: (2014), 530--573. 

\bibitem{HKL-AIHP-17} J. Helsing, H. Kang and M. Lim,
\newblock Classification of spectra of the Neumann--Poincar\'{e} operator on planar domains with corners by resonance, 
\newblock Ann. I. H. Poincare-AN, to appear., arXiv:1603.03522, 2016.

\bibitem{KKLSY-JLMS-16} H. Kang, K. Kim, H. Lee, J. Shin and S. Yu, 
\newblock Spectral properties of the Neumann--Poincar\'e operator and uniformity of estimates for the conductivity equation with complex coefficients, 
\newblock J. London Math. Soc. (2) {\bf 93} (2016), 519--546.

\bibitem{KLY}
H.~{Kang}, M.~{Lim} and S.~{Yu},
\newblock {Spectral resolution of the Neumann-Poincar\'{e} operator on intersecting disks and analysis of plasmon resonance},
\newblock arXiv:1501.02952, 2015.

\bibitem{MR0222317}
O. D. Kellogg, 
\newblock Foundations of Potential Theory, 
\newblock Dover, New York, 1953.

\bibitem{KPS}
D.~{Khavinson}, M.~{Putinar}, and H.S.~{Shapiro},
\newblock Poincar\'e's variational problem in potential theory.
\newblock Arch. Ration. Mech. Anal. {\bf 185}(1) (2007), 143--184.

\bibitem{Ko}{\sc H. K\"onig}, 
\newblock Eigenvalue distribution of compact operators. 
\newblock Operator theory: Advances and Applications, {\bf 16}. Birkh\"auser Verlag, Basel, (1986).

\bibitem{LR}
R. Langevin and H. Rosenberg, 
\newblock On curvature integrals and knots. 
\newblock Topology, 15:405416, (1976).

\bibitem{MN}
F. C. Marques and A. Neves, 
\newblock Min-Max theory and the Willmore conjecture, 
\newblock Anal. Math. {\bf 179} (2014), 683--782. 

\bibitem{Ma}
E. Martensen, 
\newblock A spectral property of the electrostatic integral operator, 
\newblock J. Math. Anal. Appl., {\bf 238} (1999), 551--557.

\bibitem{Mc}
C. A. McCarthy, 
\newblock $C_p$, 
\newblock Israel J. Math., {\bf 5} (1967), 249--271. 

\bibitem{MFZ-PR-05} I. D. Mayergoyz, D. R. Fredkin and Z. Zhang,
\newblock{Electrostatic (plasmon) resonances in nanoparticles},
\newblock{Phys. Rev. B}, {\bf 72} (2005), 155412.

\bibitem{MR2263683}
G.W. Milton and N.-A.P. Nicorovici, 
\newblock On the cloaking effects associated with anomalous localized resonance, 
\newblock Proc. R. Soc. A {\bf 462} (2006), 3027--3059.

\bibitem{Miyanishi:2015aa}
Y.~{Miyanishi} and T.~{Suzuki},
\newblock Eigenvalues and eigenfunctions of double layer potentials,
\newblock Trans. Amer. Math. {\bf 369} (2017), 8037--8059

\bibitem{Neumann-87} C. Neumann, 
\newblock \"Uber die Methode des arithmetischen Mittels, Erste and zweite Abhandlung, 
\newblock Leipzig 1887/88, in Abh. d. Kgl. S\"achs Ges. d. Wiss., IX and XIII.

\bibitem{Pl} 
J. Plemeij, 
\newblock Potentialtheoretische Untersuchungen, Preisschriften der Fiirstlich Jablonowskischen 
\newblock Gesellschaft zu Leipzig, Teubner-Verlag, Leipzig, (1911).

\bibitem{PP-JAM-14} 
K. Perfekt and M. Putinar,
\newblock Spectral bounds for the Neumann--Poincar\'e operator on planar domains with corners,
\newblock J. Anal. Math. {\bf 124} (2014), 39--57.

\bibitem{PP-arXiv} K. Perfekt and M. Putinar, 
\newblock The essential spectrum of the Neumann--Poincar\'e operator on a domain with corners, 
\newblock arXiv:1601.03183v2.

\bibitem{Poincare-AM-87} 
H. Poincar\'{e}, 
\newblock La m\'{e}thode de Neumann et le probl\`{e}me de Dirichlet, 
\newblock Acta Math. {\bf 20} (1897), 59--152.

\bibitem{RS}
M. Reed and B. Simon, 
\newblock Methods of Modern Mathematical Physics, I. Functional Analysis, 
\newblock Academic Press, New York, (1972). 

\bibitem{Res}{Y. G. Reshetnyak}, 
\newblock \textsl{Two-Dimensional Manifolds of Bounded Curvature, in ”Geometry
IV. Nonregular Riemannian Geometry, English Translation by E. Primrose}, 
\newblock Springer Encyclopedia of Mathematical Sciences, {\bf 70} (1993), 3--163.

\bibitem{Ri2}
S. Ritter, 
\newblock The spectrum of the electrostatic integral operator for an ellipsoid, 
\newblock in ``Inverse Scattering and Potential Problems in Mathematical Physics," 
(R.F.Kleinman, R.Kress, and E.Marstensen, Eds.), 
Lang, Frankfurt/Bern, (1995), 157--167. 

\bibitem{RT}
G. Rozenblum and G. Tashchiyan, 
\newblock{Eigenvalue asymptotics for potential type operators on Lipschitz surfaces}, 
\newblock Russ. J. Math. Phys. {\bf 13} (3), (2006), 326--339.

\bibitem{MR0104934}
M. Schiffer,
\newblock The Fredholm eigenvalues of plane domains,
\newblock Pacific J. Math. {\bf 7} (1957), 1187--1225.

\bibitem{Sh} 
M. Schiffer, 
\newblock Fredholm eigenvalues and conformal mapping, 
\newblock Autovalori e autosoluzioni, C.I.M.E. Summer Schools {\bf 27}, Springer (2011), 203--234.

\bibitem{S}
H. S. Shapiro, 
\newblock The Schwarz function and its generalization to higher dimensions, 
\newblock University of Arkansas Lecture Notes in the Mathematical Sciences, {\bf 9}. 
A Wiley Interscience Publication. John Wiley and Sons, Inc., New York, (1992). 

\bibitem{Shubin} M. A. Shubin, 
\newblock Pseudodifferential Operators and Spectral Theory (Second Edition), 
\newblock Springer, (2001). 

\bibitem{Si}
B. Simon, 
\newblock Trace ideals and their applications, 2nd ed., 
\newblock Amer. Math. Soc. (2005).

\bibitem{SW}
O. Steinbach and W. L. Wendland, 
\newblock On C.Neumann's method for second-order elliptic systems in domains with non-smooth boundaries, 
\newblock J. Math. Anal. Appl.{\bf 262} (2001), 733--748. 

\bibitem{Ta} 
M. E. Taylor, 
\newblock Tools for PDE: Pseudodifferential Operators, Paradifferential Operators, and Layer Potentials, \newblock Mathematical Surveys and Monographs,  {\bf 81} American Math. Soc., Providence RI, (2000).

\bibitem{Tr} 
F. G. Tricomi, 
\newblock Integral Equations, 
\newblock Wiley, New York (1957). 

\bibitem{Verch-JFA-84} {G.C. Verchota}, 
\newblock Layer potentials and boundary value problems for Laplace's equation in Lipschitz domains, 
\newblock J. Funct. Anal. {\bf 59} (1984), 572--611.

\bibitem{Wh} 
J. H. White, 
\newblock A global invariant of conformal mappings in space, 
\newblock Proc. Amer. Math. Soc. {\bf 38} (1973), 162--164.

\bibitem{Yosida}
K. Yosida,
\newblock {\sl Functional analysis}, Sixth edition,
\newblock Springer-Verlag, Berlin-New York, 1980.
\end{thebibliography}
\end{document}